\numberwithin{equation}{section}
\newtheorem{theorem}{Theorem}[section]
\newtheorem{lemma}[theorem]{Lemma}
\newtheorem{proposition}[theorem]{Proposition}
\newtheorem{corollary}[theorem]{Corollary}
\newtheorem{problem}[theorem]{Problem}
\theoremstyle{definition}
\newtheorem{definition}[theorem]{Definition}
\theoremstyle{remark}
\newtheorem{remark}[theorem]{Remark}
\newtheorem{example}[theorem]{Example}
\newtheorem{observation}[theorem]{Observation}
\newtheorem{discussion}[theorem]{Discussion}
\newtheorem{acknowledgement}{Acknowledgement}
\newcommand{\Ass}{\operatorname{Ass}}
\newcommand{\im}{\operatorname{im}}
\newcommand{\grade}{\operatorname{grade}}
\newcommand{\Spec}{\operatorname{Spec}}
\newcommand{\rad}{\operatorname{rad}}
\newcommand{\Ht}{\operatorname{ht}}
\newcommand{\pd}{\operatorname{p.dim}}
\newcommand{\Gdim}{\operatorname{Gdim}}
\newcommand{\V}{\operatorname{V}}
\newcommand{\id}{\operatorname{id}}
\newcommand{\Ext}{\operatorname{Ext}}
\newcommand{\tr}{\operatorname{tr}}
\newcommand{\Supp}{\operatorname{Supp}}
\newcommand{\eSupp}{\operatorname{hSupp}}
\newcommand{\Hom}{\operatorname{Hom}}
\newcommand{\Gor}{\operatorname{Gor}}
\newcommand{\Ann}{\operatorname{Ann}}\newcommand{\Att}{\operatorname{Att}}
\newcommand{\Hann}{\operatorname{h-Ann}}
\newcommand{\depth}{\operatorname{depth}}
\newcommand{\hass}{\operatorname{h-Ass}}
\newcommand{\eass}{\operatorname{E-Ass}}
\newcommand{\hid}{\operatorname{Hidd}}
\newcommand{\Min}{\operatorname{Min}}
\newcommand{\lo}{\longrightarrow}
\newcommand{\fm}{\frak{m}}
\newcommand{\fp}{\frak{p}}
\newcommand{\fq}{\frak{q}}
\newcommand{\fa}{\frak{a}}
\newcommand{\fb}{\frak{b}}
\newcommand{\fc}{\frak{c}}
\newcommand{\fn}{\frak{n}}
\begin{document}

\author[]{mohsen asgharzadeh}

\address{}
\email{mohsenasgharzadeh@gmail.com}

\title[ ]
{homological subsets of $\Spec$}

\subjclass[2010]{ Primary  13D07, Secondary 13D45, 14Fxx}
\keywords{Ext-modules; homological associated primes; homological annihilators; homological support; dimension theory.}

\begin{abstract}We recover some data of a module $M$ from  the Ext-family $\{\Ext^i_R(M,R)\}$. In this regard,
we investigate homological subsets of $\Spec(R)$, defined by the help of   Ext-family.
We extend   Grothendieck's calculation of  $\dim(\Ext^g_R(M,R))$. Also, we compute support and the set of all
  associated prime ideals of  the Ext-family in a serial of nontrivial cases.
\end{abstract}

\maketitle

\section{Introduction}

Throughout the paper $R$
 is a commutative noetherian local ring of dimension $d$,  and $M$ a finitely generated module of grade $g$ and dimension $r$,  otherwise specializes.  This paper deals with the invariants attached to the Ext-modules. Associated prime ideals of
$\Hom_R(-,\sim)$ computed several years ago.  As far as we know, the first computation of $\Ass(\underline{\Ext}^{\textbf{g}_0}_{\ \ \mathcal{O}_X}(\mathcal{F},\mathcal{G}))$  appeared in the $(\textsc{LC})$ by Grothendieck. Here, $(\textsc{LC})$ referred to \textit{local cohomology} \cite{41} (also, see  \cite{sga2}).
We set $\eass_R(M):=\bigcup_{i=0}^\infty\Ass(\Ext^i_R(M,R))$.

\begin{problem}(Vasconcelos)
 Is $\eass_R(M)$ finite?
\end{problem}

Note that $\eass_R(M)$ is a subset of another homological set:
We define the \textit{homological support} a module $(-)$ by $\bigcup_{i}\Supp(\Ext^i_R(-,R))$. 
In \cite{n}  Nunke
showed  a  module over certain Dedekind domains  is zero if and only if its  homological support is empty.
In Section 2  we show  that the homological support have strange properties compared  to the classical support. Despite this, we show
over finitely generated modules,  homological support is the classical support. 
 This drops $\pd(-)<\infty$ from an implicit result
 of Peskine-Szpiro   \cite[Page 68]{ps} (also see the explicit result of Jans  \cite[Corollary 3.2]{j}), where $\pd_R(-)$ stands for the projective dimension of an $R$-module $(-)$.  This result  has  some  corollaries.
For example, if 
 $M$ and $N$ are finitely generated, we show: 
 $$
 \cup_i \Supp(\Ext^i_R(M,N))=\left(\cup_i \Supp(\Ext^i_R(M,R))\right)\cap\left(\cup_i \Supp(\Ext^i_R(N,R))\right).
 $$In particular, $
 \cup_i \Supp(\Ext^i_R(M,M))=\cup_i \Supp(\Ext^i_R(M,R))=\Supp(M)$. As an immediate consequence,
$M$ is zero iff $\Ext^i_R(M,R)= 0$ for all $i$.
It may be worth to note that the noncommutative version of the last observation is very interesting:

\textbf{Strong Nakayama conjecture:} Let $A$ be  an Artinian algebra, and $0\neq S$ be  any  simple module. Then
there is an integer $n \geq 0$ such that $\Ext^n_A(S,A)\neq 0$.

For more details, see \cite{ar} by Auslander and Reiten.
Let us  focus on our commutative setting.
 As a second application, we extend an implicit result of Grothendieck \cite[6.4.4)]{41}  by avoiding
scheme-theory:

\begin{corollary}\label{obd}
If $\dim(\Ext^{d-i}_R(M,R))\leq i $ for all $i$ and  $g=d-r$, then $\dim(\Ext^{d-r}_R(M,R))=r.$
\end{corollary}
There are situations for which the assumptions are satisfied, see the discussion after than Corollary \ref{jesse2}. In particular,  under some mild  conditions we have $\dim(\Ext^{d-r}_R(M,R))=r.$
In the light of $(\textsc{LC})$ and in Proposition \ref{mm} we observe:

\begin{corollary}
The  \textit{dimension formula}  $\dim(\Ext^g_R(M,R))=\dim M$  holds for all $M$ if and only if $R$ is Cohen-Macaulay.
\end{corollary}
If we focus on modules of finite projective dimension over formally equi-dimensional rings, the story will change: according to Corollary 1.2, the dimension formula holds for all of such modules, see also \cite[Proposition 2.2]{Beder} by Beder.
In general, $M$ is not supported in the support of $\Ext^g_R(M,R)$, even over regular rings, see   Example \ref{exa1}.  However, we present situations for which $M$ is supported in  $\Supp(\Ext^g_R(M,R))$, see Corollary \ref{jesse}, \ref{jesse1} and Example \ref{exa}.

Suppose $R$ is a homomorphic image of a Gorenstein ring $S$.
Recall from \cite{vas1}  that the \textit{homological associated prime ideals} of $M$ are defined by the set $\hass_R(M):=\cup_i\Ass _R(\Ext^i_S(M,S)).$ This is  a finite set and coincides with the former $\eass_R(M)$ in the Gorenstein case. We  observe that $ \Ass(M)\subseteq \hass(M)$.
In general, $ \Ass(M)\neq\hass(M)$, despite of this we present a situation for which $ \Ass(M)= \hass(M)$. Set  $$M_{(i_1,\ldots,i_p)p}:=\Ext^{i_1}_S(\Ext^{i_2}_S(\ldots(\Ext^{i_p}_S(M,S),\ldots, S),S).$$  This comes from Vasconcelos' investigation
of the notions of \textit{homological degree} and \textit{well-hidden} associated prime ideals.
 We determine $\bigcup_{(i_1,\ldots,i_p)}\Ass(M_{(i_1,\ldots,i_p)p})$ in the diagonal case: $\cup_{p=0}^3\Ass(M_{(i,\ldots,i)p})=\cup_{p=0}^\infty\Ass(M_{(i,\ldots,i)p})$ and we compute it in the Cohen-Macaulay case, see Corollary \ref{refox1} and  \ref{refox2}. 

\S 4 investigates the following different notions of \textit{homological annihilators} and connect them to the classical annihilator:
 \begin{enumerate}
	\item[i)] $\beta(M):=\{x\in R:M_x\textit{ is }R_x\textit{-projective}\}$  \ \ \ \ \ \ \ \ \ \ (Auslander-Buchsbaum)
	\item[ii)]  $\alpha(M):=\bigcap_{\Ann(\bigwedge^i(M))\neq  0}\Ann(\bigwedge^i(M)) $\ \ \ \ \ \ \ \ \ \ \ \ \  (Auslander-Buchsbaum)  
	\item[iii)]
	$\gamma(M):=\bigcap _{i>0}\rad(\Ann_R\Ext^i_R(M,R))$\ \ \ \ \ \ \ \ \ \ \ \  (Bridger)	\item[iv)]$\Hann(M):=\prod_{i=0}^d\Ann_R\Ext^i_R(M,R)$\ \ \ \ \ \ \ \ \ \ \  \  (Vasconcelos and others).
\end{enumerate}

\S 5 presents the basic properties of $\eass_R(-)$. It may be $\eass_R(-)=\emptyset$ for a nonzero
module. But, if $(-)$ is finitely generated we show $\eass_R(-)\neq\emptyset$. Also, $\eass_R(-)$ is not
behave well with respect to inclusion (resp. quotient), however we present a situation for which
$\eass(M)\subset\eass(N)\cup\eass(M/N)$ where $N\subset M$. By an explicit computation,
we show $\eass_R(-)\neq\hass_R(-)$ even over maximal Cohen-Macaulay modules. As another corollary of homological support, we show:
\begin{corollary}
One has $\min(M)\subset\eass(M)$.
\end{corollary}
In particular, $\Ass(M)\subset\eass(M)$ provided $M$ is equi-dimensional. When $R$ is reduced,  it follows that $\Ass(R)\cap\eass(M)=\Ass(R)\cap\eSupp(M).$ Concerning Problem 1.1, we show:
\begin{enumerate}
		\item[i)] Problem 1.1 reduces to complete case.
		\item[ii)] Problem 1.1 reduces to maximal  Cohen-Macaulay modules, when $R$ is Cohen-Macaulay. \item[iii)] Problem  1.1 is true if $R$ is Gorenstein over the punctured spectrum.
		\item[iv)]Problem 1.1  reduces to cyclic modules when $R$  is normal and Cohen-Macaulay. 
			\item[v)]Problem 1.1 is true over   3-dimensional excellent normal local domains.
	\end{enumerate}
Also, we present an inductive descent. 
The final section motivated from a result of  Macaulay in 1904.
First, we recall a  more general version of this by  Serre,  see Theorem \ref{6.1} and \ref{6.2}. These results presented in the Ext-from by Griffith and Evans:
Let $R$ be a regular local ring containing a field and $I$ be a height two prime ideal such
that $\Ext^2_R(R/I,R)$ is cyclic. Then $I$ is two generated. We show that:

\begin{corollary}
Let $(R,\fm)$ be a Cohen-Macaulay local  ring and $I$ be a  Cohen-Macaulay ideal of height two and of finite projective dimension. Then $\mu(\Ext^2_R(R/I,R))=\mu(I)-1.$
\end{corollary} 

For more application of  the Ext-family $\{\Ext^i_R(M,R)\}$ where $M$ is not necessarily finite, see \cite{moh}.

\section{Homological  support }
 By $\pd_R(-)$  (resp. $\id_R(-)$) we mean
projective (resp. injective) dimension. \begin{definition}
i) By homological support of a module $(-)$ we mean
 $\bigcup_{i\geq0}\Supp(\Ext^i_R(-,R))$ and we denote it by $\eSupp(-)$.

ii) 	We say a module  is homologically nonzero if 
 its homological support is not empty. \end{definition}
Homological support may be empty for modules with quite large support:

\begin{example}\label{commentcomplete}
 Let $R$ be a  complete local integral domain of positive dimension. Let $F$  be the fraction field of $R$. It is shown by Auslander \cite[Page 166]{comment} that
$\Ext^i_R(F,R)=0$ for all $i$. So, $\eSupp(F)=\emptyset\neq\Supp(F)=\Spec(R).$
\end{example}

\begin{example}
 The complete-local assumption is important. It may be worth to note that $\Ext^1_\mathbb{Z}(\mathbb{Q},\mathbb{Z})$ is related to the \textit{ad\`{e}le} groups from
number theory. By accepting continuum hypothesis, one has $\Ext^1_\mathbb{Z}(\mathbb{Q},\mathbb{Z})=\mathbb{R}$ as a vector space over $\mathbb{Q}$. So, $\eSupp_\mathbb{Z}(\mathbb{Q})=\Supp_\mathbb{Z}(\mathbb{Q})$.
\end{example}

May be homological support is large against to the classical support:

\begin{example}
Adopt the notation of Example 2.1.  It is  shown  in \cite[Page 166]{comment} that
$R\simeq\Ext^1_R(F/R,R)$. So, $$\eSupp(F/R)=\Spec(R)\supsetneqq \Spec(R)\setminus\{(0)\}=\Supp(F/R).$$
\end{example}

\begin{lemma}\label{fg}
Let $L$ and $N$ be finitely generated and nonzero. Then $\Ext^i_R(L,N)\neq 0$   for some $i\leq \dim N$.
\end{lemma}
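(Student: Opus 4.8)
The plan is to argue by induction on $n:=\dim N$, handling the inequality $\depth N=0$ — which in particular contains the base case $\dim N=0$ — uniformly at the start. First I would note that if $\depth N=0$, then $\fm\in\Ass_R N$, so $k:=R/\fm$ embeds into $N$; since $L\neq0$ is finitely generated, Nakayama gives a surjection $L\twoheadrightarrow L/\fm L\twoheadrightarrow k$, and composing with $k\hookrightarrow N$ produces a nonzero element of $\Hom_R(L,N)$. Hence $\Ext^0_R(L,N)\neq0$ and the assertion holds with $i=0\le\dim N$.

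For the inductive step, with $n\ge1$, I may assume $\depth N\ge1$ (the case $\depth N=0$ being covered above) and pick, by prime avoidance from $\fm\not\subseteq\bigcup_{\fp\in\Ass_R N}\fp$, an element $x\in\fm$ that is a nonzerodivisor on $N$. Because $x$ lies outside every minimal prime of $N$, the finitely generated module $\overline N:=N/xN$ is nonzero (Nakayama) and satisfies $\dim\overline N=n-1$; by the induction hypothesis there is $j\le n-1$ with $\Ext^j_R(L,\overline N)\neq0$. Applying $\Hom_R(L,-)$ to $0\to N\xrightarrow{\,x\,}N\to\overline N\to0$ and chopping the long exact sequence into a short one yields
\[
0\to \Ext^j_R(L,N)\big/x\,\Ext^j_R(L,N)\to \Ext^j_R(L,\overline N)\to\big(0:_{\Ext^{j+1}_R(L,N)}x\big)\to0 .
\]
As the middle term is nonzero, at least one of the flanking terms is nonzero. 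If the left one is, then $\Ext^j_R(L,N)\neq0$ by Nakayama — here I use that $\Ext^j_R(L,N)$ is finitely generated ($R$ noetherian, $L,N$ finitely generated) and that $x\in\fm$ — and $i=j\le n$ works; if the right one is, then $\Ext^{j+1}_R(L,N)\neq0$ and $i=j+1\le n$ works.

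The only delicate point is checking that the bound $i\le\dim N$ is preserved through the recursion: the step from $N$ to $N/xN$ drops the dimension by exactly $1$ — which is where it matters that $x$, being a nonzerodivisor on $N$, misses all minimal primes of $N$ — while the connecting map of the long exact sequence shifts cohomological degree by at most $1$, so $j\le n-1$ indeed yields $i\in\{j,j+1\}\le n$. Everything else is routine (Nakayama, $\depth N=0\Leftrightarrow\fm\in\Ass_R N$, prime avoidance, finite generation of $\Ext$ between finitely generated modules over a noetherian ring), so I do not expect a real obstacle here.
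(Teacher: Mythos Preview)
Your argument is correct. The base case (more generally, the case $\depth N=0$) is handled cleanly via $k\hookrightarrow N$ and a surjection $L\twoheadrightarrow k$; the inductive step is the standard long exact sequence trick, and the dimension drop $\dim N/xN\le n-1$ (in fact $=n-1$) is exactly what is needed to close the induction. One stylistic remark: the appeal to Nakayama for ``$\Ext^j_R(L,N)/x\Ext^j_R(L,N)\neq0\Rightarrow\Ext^j_R(L,N)\neq0$'' is superfluous, as a nonzero quotient forces a nonzero module tautologically.

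Your route is genuinely different from the paper's. The paper does not induct on $\dim N$ at all; instead it invokes the well-known identity $\inf\{i:\Ext^i_R(L,N)\neq0\}=\grade(\Ann L,N)$ and then bounds this grade by $\Ht_N(\Ann L)$, which in turn is $\le\dim N$ once one observes $\Supp(N)\cap\V(\Ann L)\neq\emptyset$ (this last because $R$ is local, so $L\otimes_R N\neq0$). The paper's proof is shorter but leans on the Ext--grade characterization and the grade--height inequality as black boxes; your proof is more elementary and self-contained, essentially re-deriving the needed inequality by hand via depth reduction. Either approach is perfectly adequate for the lemma.
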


\begin{proof}  For each ideal $I$ recall that $\Ht_N(I)$ were defined by $\inf\{\dim(N_{\fp}):\fp\in\Supp(N)\cap\V(I)\}$, where $\V(I)$ is the set of all prime ideals containing $I$. Since $R$ is local,  we have $L\otimes N\neq  0$. Consequently, $\Supp(L)\cap\Supp(N)\neq\emptyset$. Respell this as $\Supp(N)\cap\V(\Ann L)\neq\emptyset$. Deduce from this
that $\Ht_N(\Ann L)<\infty$.
It sufficient to recall that $$\inf\{i:\Ext^i_R(L,N)\neq 0\}=\grade(\Ann L,N)\leq\Ht_N(\Ann L)<\infty .$$
\end{proof}

\begin{corollary}\label{eqq}
Keep the above notation in mind. Then
$\Supp(L\otimes N)\subset\bigcup_{i=0}^{\dim N} \Supp(\Ext^i_R(L,N))$. 
\end{corollary}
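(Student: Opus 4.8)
The plan is to reduce the statement about $\Supp(L\otimes N)$ to the already-established Lemma \ref{fg} by localizing at an arbitrary prime in $\Supp(L\otimes N)$. The point is that all the relevant operations — forming $\Ext$, tensoring, and taking support — behave well with respect to localization, so a prime-by-prime argument suffices.

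**First**, fix $\fp\in\Supp(L\otimes N)=\Supp(L)\cap\Supp(N)$. We must produce some $i\leq\dim N$ with $\fp\in\Supp(\Ext^i_R(L,N))$. Localize at $\fp$: since $L$ and $N$ are finitely generated, $(\Ext^i_R(L,N))_\fp\cong\Ext^i_{R_\fp}(L_\fp,N_\fp)$ for every $i$. Now $L_\fp$ and $N_\fp$ are finitely generated $R_\fp$-modules, and they are nonzero precisely because $\fp$ lies in both supports. Apply Lemma \ref{fg} over the local ring $R_\fp$: there exists $i\leq\dim(N_\fp)$ with $\Ext^i_{R_\fp}(L_\fp,N_\fp)\neq 0$, i.e. $(\Ext^i_R(L,N))_\fp\neq 0$, which says exactly $\fp\in\Supp(\Ext^i_R(L,N))$. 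Since $\dim(N_\fp)\leq\dim N$, this $i$ satisfies $i\leq\dim N$, so $\fp\in\bigcup_{i=0}^{\dim N}\Supp(\Ext^i_R(L,N))$.

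**The only mild subtlety** is making sure the bound on $i$ really comes out as $\dim N$ rather than $\dim(N_\fp)$ — but this is immediate since localization cannot increase dimension, $\dim(N_\fp)=\dim(R_\fp/\Ann N_\fp)\leq\dim(R/\Ann N)=\dim N$. One should also double-check the standard fact that $\Ext$ commutes with localization for finitely generated modules over a noetherian ring, which is routine (take a projective resolution of $L$ by finitely generated free modules and note localization is exact). There is no real obstacle here; the corollary is essentially the ``global'' repackaging of the local Lemma \ref{fg}, obtained by running that lemma at each point of the intersection of the supports and observing that the single integer $i$ it produces always lands in the required range.
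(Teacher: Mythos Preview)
Your proof is correct and follows essentially the same approach as the paper: localize at a prime $\fp$ in $\Supp(L\otimes N)$, apply Lemma~\ref{fg} over $R_\fp$ to obtain some $i\leq\dim N_\fp\leq\dim N$ with $\Ext^i_{R_\fp}(L_\fp,N_\fp)\neq 0$, and then use the compatibility of $\Ext$ with localization for finitely generated modules. The only difference is that you spell out a couple of routine justifications (localization of $\Ext$, the dimension inequality) that the paper leaves implicit.
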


\begin{proof}
Let $\fp\in\Supp(L\otimes N)$. Then $L_{\fp}$ and $N_{\fp}$ are nonzero. In view of Lemma \ref{fg}, there is an $i\leq \dim N_{\fp}\leq \dim N$ such that
$\Ext^i_{R_{\fp}}(L_{\fp},N_{\fp})\neq 0$.  Note that $\Ext^i_R(L,N)_{\fp}\simeq \Ext^i_{R_{\fp}}(L_{\fp},N_{\fp})$, because of the finiteness of $L$ and $N$. So, $\fp\in \bigcup_{i=0}^{\dim N}\Supp(\Ext^i_R(L,N))$.
\end{proof}

\begin{proposition} \label{f} Let $M$ and $N$ be  finitely generated and nonzero. Then
$$\Supp(M\otimes N)=\bigcup_{i=0}^{\infty} \Supp(\Ext^i_R(M,N))=\bigcup_{i=0}^{\dim N} \Supp(\Ext^i_R(M,N)).$$In particular, $\eSupp(M)=\Supp(M).$
\end{proposition}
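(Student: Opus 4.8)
The plan is to establish the chain of inclusions
\[
\bigcup_{i=0}^{\dim N}\Supp(\Ext^i_R(M,N))\ \subseteq\ \bigcup_{i=0}^{\infty}\Supp(\Ext^i_R(M,N))\ \subseteq\ \Supp(M\otimes N)\ \subseteq\ \bigcup_{i=0}^{\dim N}\Supp(\Ext^i_R(M,N)),
\]
which forces all three sets to coincide. The first inclusion is trivial, and the last inclusion is exactly Corollary \ref{eqq} applied with $L=M$. So the real content is the middle inclusion: for every $i\geq 0$, $\Supp(\Ext^i_R(M,N))\subseteq\Supp(M\otimes N)$.

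To prove the middle inclusion, I would localize. Fix a prime $\fp$ with $\Ext^i_R(M,N)_{\fp}\neq 0$; by finiteness of $M$ and $N$ this gives $\Ext^i_{R_{\fp}}(M_{\fp},N_{\fp})\neq 0$, so in particular $M_{\fp}\neq 0$ and $N_{\fp}\neq 0$. Now I want to conclude $M_{\fp}\otimes_{R_{\fp}}N_{\fp}\neq 0$, i.e. $\fp\in\Supp(M\otimes N)$. Here is where I would use that $R_{\fp}$ is local (noetherian) and both modules are finitely generated and nonzero: by Nakayama, $M_{\fp}\otimes_{R_{\fp}}\kappa(\fp)$ and $N_{\fp}\otimes_{R_{\fp}}\kappa(\fp)$ are nonzero vector spaces over the residue field $\kappa(\fp)$, hence their tensor product over $\kappa(\fp)$ is nonzero; since $M_{\fp}\otimes_{R_{\fp}}N_{\fp}\otimes_{R_{\fp}}\kappa(\fp)\cong (M_{\fp}\otimes_{R_{\fp}}\kappa(\fp))\otimes_{\kappa(\fp)}(N_{\fp}\otimes_{R_{\fp}}\kappa(\fp))\neq 0$, we get $M_{\fp}\otimes_{R_{\fp}}N_{\fp}\neq 0$. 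This is precisely the standard fact ``$\Supp(M\otimes N)=\Supp M\cap\Supp N$ for finitely generated modules'' used in the reverse direction from the proof of Lemma \ref{fg}, so I could alternatively just cite that. Either way, $\fp\in\Supp(M\otimes N)$, completing the middle inclusion.

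With all three inclusions in hand, the equality follows immediately, and I would remark that the finiteness of the range $0\leq i\leq \dim N$ is the genuinely useful refinement coming from Corollary \ref{eqq} (which in turn rests on Lemma \ref{fg} and the Rees-type bound $\grade(\Ann L,N)\leq\Ht_N(\Ann L)$). I do not anticipate a serious obstacle here: the argument is a packaging of Corollary \ref{eqq} together with the elementary support-of-tensor-product fact, and the only point requiring a little care is making sure the Nakayama/residue-field step is invoked correctly over the local ring $R_{\fp}$ rather than over $R$ itself. If one wanted to avoid even that, one could note $\Ann(M\otimes N)=\Ann M+\Ann N$ up to radical and compare with $\Ann\Ext^i_R(M,N)\supseteq\Ann M+\Ann N$, but the localization argument is cleaner.
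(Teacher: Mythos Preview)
Your proof is correct and follows essentially the same route as the paper: the paper also establishes the same circular chain of inclusions, invoking Corollary~\ref{eqq} for the nontrivial step and citing the identities $\Supp(M\otimes N)=\Supp(M)\cap\Supp(N)$ and $\Supp(\Ext^i_R(M,N))\subseteq\Supp(M)\cap\Supp(N)$ as ``trivial facts''. The only difference is cosmetic---you unpack the Nakayama/residue-field justification for those facts, whereas the paper simply asserts them.
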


\begin{proof}
 We bring the following  trivial facts  \begin{enumerate}
\item[1)] $\Supp(M\otimes N)=\Supp(M)\cap \Supp(N)$,
\item[2)]  $\Supp(\Ext^i_R(M,N))\subset\Supp(M)\cap \Supp(N)$, and
\item[3)]
$\cup_{i=0}^{\dim N} \Supp(\Ext^i_R(M,N))\subset\cup_i \Supp(\Ext^i_R(M,N))$.
\end{enumerate}
We look at
$$
\begin{array}{ll}
\Supp(M\otimes N)&\stackrel{1}=\Supp(M)\cap \Supp(N)\\&\stackrel{2}\supseteq\bigcup_i \Supp(\Ext^i_R(M,N))\\&\stackrel{3}\supseteq\bigcup_{i=0}^{\dim N} \Supp(\Ext^i_R(M,N))
\\&\stackrel{\ref{eqq}}\supseteq\Supp(M\otimes N).
\end{array}$$
The proof is now complete.
\end{proof}

 $M$ is called \textit{quasi-perfect} if $\inf\{i:\Ext^i_R(M,R)\neq 0\}=\sup\{i:\Ext^i_R(M,R)\neq 0\}.$

\begin{corollary}\label{jesse}
Let $M$ be quasi-perfect of grade $g$. Then $\Supp(M)=\Supp(\Ext^{g}_R(M,R))$.
In particular, $\dim(M)=\dim(\Ext^{g}_R(M,R))$.
\end{corollary}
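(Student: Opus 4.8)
The plan is to obtain this directly from Proposition \ref{f} by specializing the second variable to $N = R$. First I would unwind what quasi-perfectness says about the Ext-family: the identity $\inf\{i:\Ext^i_R(M,R)\neq 0\}=\grade(\Ann M,R)=g$ — the same formula already invoked in the proof of Lemma \ref{fg}, now with $N=R$ — identifies the lower end of the family with the grade. Hence the hypothesis $\inf=\sup$ forces $\Ext^i_R(M,R)=0$ for every $i\neq g$ and $\Ext^g_R(M,R)\neq 0$. Since a vanishing module has empty support, this means $\Supp(\Ext^i_R(M,R))=\emptyset$ whenever $i\neq g$.

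Next I would apply Proposition \ref{f} with $N:=R$ (which is finitely generated and nonzero, and $M$ is nonzero since it has a finite grade $g$). That proposition gives
\[
\Supp(M)=\Supp(M\otimes_R R)=\bigcup_{i=0}^{\infty}\Supp(\Ext^i_R(M,R)).
\]
Combining this with the previous paragraph, all the terms of the union on the right are empty except the one indexed by $i=g$, so the union collapses to $\Supp(\Ext^g_R(M,R))$, which is exactly the claimed equality $\Supp(M)=\Supp(\Ext^g_R(M,R))$.

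I do not expect a genuine obstacle here: the entire content is carried by Proposition \ref{f}, and the only care needed is the bookkeeping that pins the nonvanishing degree to $g=\grade M$. It is worth recording that this recovers and sharpens the classical perfect case (where $g=\pd_R M$ and $\Ext^g_R(M,R)$ is well known to share the support of $M$): quasi-perfectness alone, with no finiteness of projective dimension, already suffices for the support identity.
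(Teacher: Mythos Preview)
Your argument is correct and is exactly the intended one: the paper states Corollary~\ref{jesse} with no proof precisely because it is immediate from Proposition~\ref{f} with $N=R$, the union on the right collapsing to the single term $\Supp(\Ext^{g}_R(M,R))$ by quasi-perfectness. Your unpacking of why the nonvanishing degree is $g=\grade M$ is the only bookkeeping needed, and it matches the formula $\inf\{i:\Ext^i_R(L,N)\neq 0\}=\grade(\Ann L,N)$ already used in Lemma~\ref{fg}.
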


\begin{proof}
	This is immediate from Proposition
	\ref{f}.
\end{proof}

\begin{corollary}\label{jesse}
	Let $M$ and $N$ be finitely generated. Then $$
	\begin{array}{ll}
	\bigcup_i \Supp(\Ext^i_R(M\otimes_RN,R))&=\bigcup_i \Supp(\Ext^i_R(M,N))\\&=\left(\bigcup_i \Supp(\Ext^i_R(M,R))\right)\cap\left(\bigcup_i \Supp(\Ext^i_R(N,R))\right).
	\end{array}$$In particular, $
	\cup_i \Supp(\Ext^i_R(M,M))=\cup_i \Supp(\Ext^i_R(M,R))$.
\end{corollary}

\begin{proof}
Since $M$ and $N$ are finitely generated, 	$\Supp(M\otimes N)\stackrel{(+)}=\Supp(M)\cap \Supp(N)$.
	Then
$$
\begin{array}{ll}
\left(\bigcup_i \Supp(\Ext^i_R(M,R))\right)\cap\left(\bigcup_i \Supp(\Ext^i_R(N,R))\right)&\stackrel{\ref{f}}=\Supp(M)\cap \Supp(N)\\&\stackrel{(+)}=\Supp(M\otimes N)\\&\stackrel{\ref{f}}=\bigcup_{i}\Supp(\Ext^i_R(M,N)).
\end{array}$$
Another use of Proposition
\ref{f}, completes the proof of first claim.
To see the particular case we put $N:=M$ and apply the first part.
\end{proof}

	We say a module  $F$ is homologically  free if 
	$\Ext^i_R(F,R)=\Ext^i_R(F,F)=0$ for all $i>0$.

\begin{remark}
\begin{enumerate}
\item[1)]  	Let $R$ be a 1-dimensional comolete local integral domain.
	Then the fraction field of $R$  is homologically  free. 
	
\item[2)] 
	The complete
	assumption is important. 
	
\item[3)]  A conjecture of Auslander and Reiten predicted that 
	finitely generated  homologically  free modules over a local integral domain
	is free.\end{enumerate}
\end{remark}

\begin{corollary}\label{jesse1}
Let $(R,\fm)$ be a $d$-dimensional Cohen-Macaulay local ring, Gorenstein on its punctured spectrum and $M$ be  1-dimensional.  Then $\Supp(\Ext^{d-1}_R(M,R))=\Supp(M)$.  In  particular, $\dim(\Ext^{d-1}_R(M,R))=1$.
\end{corollary}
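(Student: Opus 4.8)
The plan is to combine Proposition~\ref{f} with a localization argument that concentrates the top Ext-module on the closed point.

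First I would record the grade of $M$. Since $R$ is Cohen-Macaulay, $\grade(\Ann M,R)=\Ht(\Ann M)$ and $\Ht(\Ann M)+\dim(R/\Ann M)=d$; as $\dim(R/\Ann M)=\dim M=1$, this gives $\grade(\Ann M,R)=d-1$. Because $\inf\{i:\Ext^i_R(M,R)\neq0\}=\grade(\Ann M,R)$ (as in the proof of Lemma~\ref{fg}), we get $\Ext^i_R(M,R)=0$ for $i<d-1$ while $\Ext^{d-1}_R(M,R)\neq0$; in particular $\fm\in\Supp(\Ext^{d-1}_R(M,R))$ since $R$ is local. Note $d\ge1$ automatically, since $1=\dim M\le\dim R=d$.

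Next, apply Proposition~\ref{f} with $N=R$ — both $M$ and $R$ are finitely generated and nonzero, and $\dim R=d$ — to obtain
\[
\Supp(M)=\bigcup_{i=0}^{d}\Supp(\Ext^i_R(M,R))=\Supp(\Ext^{d-1}_R(M,R))\cup\Supp(\Ext^{d}_R(M,R)),
\]
the second equality by the vanishing just noted. So the whole statement reduces to showing $\Supp(\Ext^d_R(M,R))\subseteq\{\fm\}$. For this, take $\fp\in\Spec(R)$ with $\fp\neq\fm$. By hypothesis $R_\fp$ is Gorenstein, and $\dim R_\fp=\Ht\fp\le d-1$ because $\fp\subsetneq\fm$; hence $\id_{R_\fp}R_\fp=\dim R_\fp\le d-1$, and therefore $\Ext^d_R(M,R)_\fp\cong\Ext^d_{R_\fp}(M_\fp,R_\fp)=0$ (the isomorphism by finiteness of $M$, exactly as in Corollary~\ref{eqq}). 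Thus $\Supp(\Ext^d_R(M,R))\subseteq\{\fm\}$, and feeding this back in,
\[
\Supp(\Ext^{d-1}_R(M,R))\subseteq\Supp(M)\subseteq\Supp(\Ext^{d-1}_R(M,R))\cup\{\fm\}=\Supp(\Ext^{d-1}_R(M,R)),
\]
where the last equality uses $\fm\in\Supp(\Ext^{d-1}_R(M,R))$ from the first step. Hence $\Supp(M)=\Supp(\Ext^{d-1}_R(M,R))$, and consequently $\dim(\Ext^{d-1}_R(M,R))=\dim M=1$.

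There is no serious obstacle here; the two points needing a moment's care are, first, invoking Proposition~\ref{f} to truncate the range of $i$ to $i\le d$ (over a Cohen-Macaulay ring that is not Gorenstein, $\Ext^i_R(M,R)$ can be nonzero for arbitrarily large $i$; the same localization shows those are supported only at $\fm$, so one could argue directly, but truncating is cleaner), and second, observing that the closed point $\fm$ — which the localization argument cannot excise from $\Supp(\Ext^d_R(M,R))$ — is forced into $\Supp(\Ext^{d-1}_R(M,R))$ by the grade equality, so it does no harm.
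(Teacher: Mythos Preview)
Your proof is correct and follows essentially the same route as the paper: bound $\Supp(\Ext^d_R(M,R))$ by $\{\fm\}$ via the Gorenstein-on-punctured-spectrum hypothesis, use Cohen--Macaulayness to get $\grade(M)=d-1$ so that $\fm\in\Supp(\Ext^{d-1}_R(M,R))$, and then invoke Proposition~\ref{f}. The only cosmetic difference is that you derive $\grade(M)=d-1$ directly from $\grade(\Ann M,R)=\Ht(\Ann M)$ and the Cohen--Macaulay dimension equality, whereas the paper phrases it as an instance of the grade conjecture (citing \cite{ps}); these are the same fact.
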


\begin{proof}
We have $\id(R_{\fp})=\Ht(\fp)$ for all non-maximal prime ideal $\fp$. This yields  that $\Supp(\Ext^d_R(M,R))\subset \{\fm\}$. We are going to apply the grade conjecture. This is the place that we use the
Cohen-Macaulay assumption, because  the conjecture verified over such a ring without any stress on the finiteness of projective dimension, see \cite[I. Lemma 4.8]{ps}. Thus, $$\grade(M)=\dim R-\dim M=d-1,$$ e.g. $\Ext^{d-1}_R(M,R)\neq0$. In particular, $\fm$ belongs to its support. In view of Proposition  \ref{f}, we have $$
\begin{array}{ll}
\Supp(M)&=\Supp(\Ext^{d-1}_R(M,R))\cup\Supp(\Ext^{d}_R(M,R))\\&=\Supp(\Ext^{d-1}_R(M,R))\cup\{\fm\}\\&=\Supp(\Ext^{d-1}_R(M,R)).\\
\end{array}$$
This is what we want to prove.
\end{proof}

\begin{corollary}\label{jesse2}
Let $(R,\fm)$ be a $d$-dimensional local ring and $M$ be $r$-dimensional. If $\dim(\Ext^{d-i}_R(M,R))\leq i $ for all $i$ and that $g=d-r$,  then $\dim(\Ext^{g}_R(M,R))=r.$
\end{corollary}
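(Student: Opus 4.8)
The plan is to pin down $\dim(\Ext^{g}_R(M,R))$ from two sides, using the hypothesis $g=d-r$ together with Proposition \ref{f}. Recall that $g=\grade(M)=\inf\{i:\Ext^i_R(M,R)\neq 0\}$, so the assumption $g=d-r$ says precisely that $\Ext^i_R(M,R)=0$ for every $i<d-r$, while $\Ext^{d-r}_R(M,R)\neq 0$. In particular $d-r$ lies in the homological support and $\dim(\Ext^{d-r}_R(M,R))$ is a genuine (nonnegative) integer, so it makes sense to bound it.

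For the upper bound I would just specialize the standing hypothesis $\dim(\Ext^{d-i}_R(M,R))\leq i$ to $i=r$, which gives $\dim(\Ext^{d-r}_R(M,R))\leq r$ at once.

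For the lower bound the main input is Proposition \ref{f} applied with $N=R$: since $M\otimes_R R\cong M$ and $\dim R=d$, it yields $\Supp(M)=\bigcup_{j=0}^{d}\Supp(\Ext^j_R(M,R))$, hence $r=\dim M=\max_{0\leq j\leq d}\dim(\Ext^j_R(M,R))$. I then claim no index $j\in\{0,\dots,d\}$ other than $j=d-r$ can attain this maximum. Indeed, if $j<d-r$ then $\Ext^j_R(M,R)=0$ by the grade computation above; and if $d-r<j\leq d$, then rewriting the hypothesis as $\dim(\Ext^{j}_R(M,R))\leq d-j$ forces $\dim(\Ext^j_R(M,R))\leq d-j< d-(d-r)=r$. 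Consequently the value $r$ must be realized at $j=d-r$, i.e. $\dim(\Ext^{d-r}_R(M,R))\geq r$. Combining the two estimates gives $\dim(\Ext^{g}_R(M,R))=\dim(\Ext^{d-r}_R(M,R))=r$.

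I do not anticipate a real obstacle: the argument is essentially index bookkeeping (passing from ``$\dim(\Ext^{d-i}_R(M,R))\leq i$ for all $i$'' to ``$\dim(\Ext^{j}_R(M,R))\leq d-j$ for all $j$'') combined with the two structural facts that $\Ext$ vanishes below the grade and that Proposition \ref{f} already confines the relevant cohomological degrees to $0\leq j\leq d$, so possible nonvanishing of $\Ext^j_R(M,R)$ for $j>d$ is irrelevant.
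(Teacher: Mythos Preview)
Your proof is correct and follows essentially the same route as the paper's: both use Proposition \ref{f} to write $\Supp(M)=\bigcup_{j=g}^{d}\Supp(\Ext^j_R(M,R))$ and then rule out every index $j>g=d-r$ from contributing dimension $r$ via the hypothesis $\dim(\Ext^{j}_R(M,R))\leq d-j<r$. The only cosmetic difference is that the paper phrases the lower bound by picking a maximal chain $\fp_0\subset\cdots\subset\fp_r$ in $\Supp(M)$ and arguing its bottom lies in $\V(\Ann\Ext^{g}_R(M,R))$, whereas you pass directly through $\dim M=\max_j\dim(\Ext^j_R(M,R))$; these are the same argument.
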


The first condition holds when $\pd(M)<\infty$ or $R$ is Gorenstein. The second condition holds either $R$ is Cohen-Macaulay or $\pd(M)<\infty$ and $R$ is formally equidimensional.

 \begin{proof}
  Set $\fb_i:=\Ann(\Ext^{i}_R(M,R))$.
 In view of Proposition  \ref{f} we see $$
\Supp(M)=\V(\fb_{g})\cup\cdots\cup\V(\fb_{d}).$$ Let $\fp_0\subset \cdots\subset\fp_r=\fm$ be a maximal and strict chain of prime ideals in the $\Supp(M)$.
We claim that $\fp_0\in\V(\fb_{g})$. Suppose on the contradiction that  $\fp_0\notin\V(\fb_{g})$. Hence $\fp_0\in\V(\fb_{\ell})$ for some $\ell>g$, i.e., $\fp_0\supset \fb_{\ell}$. Clearly, $\fp_j\in\V(\fb_{\ell})$ for all $j$. By definition of $\dim$, we have $\dim (R/\fb_{\ell})\geq r=d-g$. Due to our assumption, $\V(\fb_{\ell})$ is of dimension at most $d-\ell$. Combining these, we observe $d-g>d-\ell\geq\dim (R/\fb_{\ell})\geq d-g,$ which is a contradiction.
Thus, $\fp_0\in\V(\fb_{g})$. Therefore,
$r=\dim( M)\geq\dim(\Ext^{g}_R(M,R))\geq r.$ So,  $\dim(\Ext^{g}_R(M,R))=r$.
\end{proof}
 The following shows that $\Supp(\Ext^{g}_R(M,R))\neq\Supp(M)$.
\begin{example}\label{exa1}
	 Let $R:=\mathbb{Q}[[x,y,z]]$  and let $I:=(xy,xz)$.  Then
		$\Supp(R/I)\neq\Supp(\Ext^{g}_R(R/I,R))$.
\end{example}

\begin{proof}
	 Set $M:=R/I$. We look at the minimal free resolution of $M$:$$\begin{CD}
	0 @>>> R @>
	\begin{array}{ccc}
	[-z\quad
	y ]\\
	\end{array}>>  R^2 @>[
	xy   \quad
	xz ]^t>>  R.
	\\
	\end{CD}
	$$
	Apply $\Hom_R(-,R)$ to it, we get to the following complex $$\begin{CD}
	R @>[
	xy   \quad
	xz ]^t
	>>  R^2 @>
	[-z\quad y ]>>  R @>>> 0.
	\\
	\end{CD}
	$$
	Note that $\Ass(\Hom_R(M,R))=\Supp(M)\cap \Ass(R)=\emptyset$. Let us compute the $\Ext^1_R(M,R)$.
	As $y,z$ is a regular sequence in $R$, the \textit{Koszul} complex on $\{y,z\}$ presents the free resolution of  $(y,z)$. Thus, the kernel of  $(-z,y):R^2\stackrel{}\lo R$ is generated
	by  $(y,z)$ and so $\Ext^1_R(M,R)\simeq (y,z)R/(xy,xz)R.$
	This yields that $\Ann(\Ext^1_R(M,R))=(x)$.
	Therefore, $g:=\grade(M)=1$  and  $(y,z)\in \Supp(M)\setminus \V(x)$. We deduce from  this that $\Supp(\Ext^g_R(M,R))=\V(x)\subsetneqq \Supp(M).$
\end{proof}

\begin{example}\label{exa}Let $R$ be Gorenstein. The following holds:
\begin{enumerate}
	\item[i)]  $L:=\Ext^{i}_R(\Ext^{i}_R(M,R),R)$ for each $i$. Then $\Supp(\Ext^{i}_R(L,R))=\Supp(L)$.

	\item[ii)] Let $R$ be Gorenstein  and $M$ a Cohen-Macaulay module of grade $g$. Then $\Supp(\Ext^{g}_R(M,R))=\Supp(M)$.\end{enumerate}
\end{example}

\begin{proof}
i) This follows by \cite[7.60]{brid}, where Bridger has shown the following amusing result: $$L=\Ext^{i}_R(\Ext^{i}_R(M,R),R)\simeq\Ext^{i}_R(\Ext^{i}_R(\Ext^{i}_R(\Ext^{i}_R(M,R),R),R),R)=\Ext^{i}_R(\Ext^{i}_R(L,R),R).$$

ii) This follows from the Ext-duality \cite[3.3.10]{BH}:  $M\simeq \Ext^{g}_R(\Ext^{g}_R(M,R),R).$
\end{proof}

\begin{discussion}\label{dim1}(Grothendieck \cite[6.4.4)]{41})
Let $(R,\fm)$ be a $d$-dimensional Cohen-Macaulay local ring with a canonical module and $M$ a finitely generated module. Then $\dim(\Ext^{d-\dim M}_R(M,\omega_R))=\dim M$.
\end{discussion}

\begin{proposition}\label{mm}
Let $(R,\fm)$ be  a local ring and $M$ a finitely generated module of grade $g$. Then $\dim(\Ext^g(M,R))=\dim M$  for all $M$ if and only if $R$ is Cohen-Macaulay.
\end{proposition}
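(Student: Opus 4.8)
The plan is to argue both implications via localization, reducing everything to the behaviour of $\Ext^g_R(M,R)$ at primes $\fp$ in $\Supp(M)$ of maximal coheight, together with the known characterization of Cohen--Macaulay rings by the grade--codimension equality. For the easy direction, suppose $R$ is Cohen--Macaulay. Then $R$ satisfies the grade conjecture (the reference \cite{ps} is already invoked in the proof of Corollary \ref{jesse1}): $\grade(M)=\dim R-\dim M$, so $g=d-r$. The hypothesis $\dim(\Ext^{d-i}_R(M,R))\le i$ for all $i$ holds over Cohen--Macaulay rings (stated right after Corollary \ref{jesse2}: injective dimension of $R_\fp$ equals $\Ht\fp$, so $\Supp(\Ext^{d-i}_R(M,R))$ sits in primes of coheight $\le i$). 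Hence Corollary \ref{jesse2} applies verbatim and gives $\dim(\Ext^g_R(M,R))=r=\dim M$. This disposes of one direction with essentially no new work.

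For the converse, I would argue contrapositively: if $R$ is not Cohen--Macaulay, I exhibit a finitely generated module $M$ for which $\dim(\Ext^{\grade M}_R(M,R))\neq\dim M$. The natural candidate is a cyclic module $M=R/\fp$ for a suitably chosen prime $\fp$, or more simply $M=R$ itself when $\depth R<\dim R$: then $g=\grade(R,R)=\operatorname{grade}(0,R)=0$, so $\Ext^0_R(R,R)=R$ and $\dim(\Ext^g_R(R,R))=\dim R=\dim M$ — which unfortunately does \emph{not} give a counterexample. So $M=R$ is too crude; one must pick $M$ detecting the failure of Cohen--Macaulayness. The right choice is to take a prime $\fp$ witnessing $\Ht\fp+\dim R/\fp<d$ (such a $\fp$ exists iff $R$ is not equidimensional in the relevant sense) or, when $R$ is equidimensional but not Cohen--Macaulay, to use the local cohomology characterization: there is some $i<d$ with $H^i_\fm(R)\neq 0$, and via local duality (after passing to the completion, which is harmless for the dimension of an $\Ext$) this $H^i_\fm(R)$ is Matlis dual to $\Ext^{d-i}_{\widehat R}(\widehat R,\omega)$-type data. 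Concretely I would run the argument in the completion $\widehat R$: write $\widehat R=S/\fa$ with $S$ regular (or Gorenstein) of dimension $n$, set $M$ to be a high syzygy of a residue field or a carefully chosen cyclic module, and compute $\grade$ and $\dim\Ext$ through the change-of-rings spectral sequence relating $\Ext_{\widehat R}$ and $\Ext_S$.

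The cleanest route, which I would try first, is this: the equality $\dim(\Ext^g_R(M,R))=\dim M$ \emph{for all} $M$ forces in particular $\grade(M)=d-\dim M$ for all $M$ (because $\Ext^g_R(M,R)\neq 0$ always, and if $g>d-\dim M$ then no prime of $\Supp(M)$ of coheight $\dim M$ can lie in $\V(\Ann\Ext^g)$ once one knows $\dim(R/\fb_i)\le$ something — this is exactly the counting argument inside Corollary \ref{jesse2} run in reverse). But $\grade(M)=\dim R-\dim M$ for every finitely generated $M$ is one of the standard equivalent formulations of $R$ being Cohen--Macaulay (it suffices to test on $M=R/\fp$ for all $\fp$, giving $\Ht\fp=\grade\fp$, hence $R$ Cohen--Macaulay). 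So the proposition reduces to: \emph{if the dimension formula holds for all $M$ then the grade formula holds for all $M$}, plus the standard fact just quoted. I expect the main obstacle to be exactly this reduction — one must be careful that $\dim(\Ext^g_R(M,R))=\dim M$ genuinely implies $g=d-\dim M$ and not merely an inequality; establishing this may require choosing $M$ (e.g.\ $M=R/\fp$ with $\dim R/\fp$ large) so that the chain-of-primes bookkeeping from Corollary \ref{jesse2} is tight, and handling the non-equidimensional case where some component of $\Spec R$ is short. Passing to $\widehat R$ to invoke local duality is the safety net if the direct grade argument stalls.
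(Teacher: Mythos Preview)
For the implication ``$R$ Cohen--Macaulay $\Rightarrow$ formula'', your shortcut through Corollary~\ref{jesse2} does not go through. That corollary needs the bound $\dim(\Ext^{d-i}_R(M,R))\le i$ for all $i$, and the remark immediately after it says this bound is available when $R$ is \emph{Gorenstein} or $\pd(M)<\infty$, not for arbitrary Cohen--Macaulay $R$. Your justification, ``$\id(R_\fp)=\Ht\fp$'', is precisely the Gorenstein condition; over a Cohen--Macaulay ring that is not Gorenstein the bound can genuinely fail (for instance $R=k[[x,y,s,t]]/(s^2,st,t^2)$ is Cohen--Macaulay of dimension $2$, and for $M=R/(s,t)$ one checks $\dim\Ext^1_R(M,R)=2>1$). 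The paper therefore argues differently: it completes $R$ to secure a canonical module $\omega_R$, invokes Grothendieck's result $\dim\Ext^{d-\dim M}_R(M,\omega_R)=\dim M$ from Discussion~\ref{dim1}, uses the grade identity $g=d-\dim M$ valid over Cohen--Macaulay rings, and then transfers the conclusion from $\omega_R$ to $R$ by killing a maximal $R$-sequence $\underline{x}$ in $\Ann M$ and comparing $\Hom_{\overline R}(M,\overline R)$ with $\Hom_{\overline R}(M,\omega_{\overline R})$ via the equality $\Ass_{\overline R}(\overline R)=\Ass_{\overline R}(\omega_{\overline R})$.

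For the converse, the paper does not pursue anything like your search for a prime with a height--coheight defect, nor your ``formula $\Rightarrow$ grade formula'' reduction; it simply takes $M=R/\fm$, records $g=\grade(R/\fm)=\depth R$, and writes $\dim\Ext^g_R(R/\fm,R)=0\lneqq d-g$. You should compare your elaborate plan to this one-line choice---and also scrutinize it: for $M=R/\fm$ one has $\dim M=0$, so the equation $\dim\Ext^g_R(M,R)=\dim M$ under test reads $0=0$, while the displayed inequality $0<d-g$ is really a statement about the \emph{grade} identity $g=d-\dim M$ rather than about the dimension formula as stated in the proposition. Your instinct that a less trivial witness is needed for this direction may therefore be well founded, even if your sketch does not yet pin one down.
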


\begin{proof}
Suppose first that  $R$  is not Cohen-Macaulay. We show the dimension formula does not hold   for the residue field. Note that $g:=\grade(R/\fm)=\depth(R)$. So  $\dim(\Ext^{g}_R(R/\fm,R))=0\lneqq d-g.$

Suppose now that  $R$  is  Cohen-Macaulay. In order to prove the formula without loss of generality, we assume that $R$ is complete. Then $R$ has a canonical module \cite[Theorem 3.3.6]{BH}. By Discussion \ref{dim1}, $\dim(\Ext^{d-\dim M}_R(M,\omega_R))=\dim M$. Revisiting \cite[Lemma I.4.8]{ps}, one has $g=d-\dim M$.
Let $\underline{x}:=x_1,\ldots,x_g$ be a maximal $R$-sequence in $\Ann(M)$. Put $\overline{R}:=R/\underline{x}R$ and note that $M$ can view as an $\overline{R}$-module. As $\omega_R$ is maximal Cohen-Macaulay,  $\underline{x}$ is a $\omega_R$-sequence in $\Ann(M)$. Due to the Rees lemma \cite[Page 140]{mat}, there is the isomorphism$$\Ext^{g}_R(M,\omega_R)\simeq \Ext^{0}_{\overline{R}}(M,\omega_R/\underline{x}\omega_R)\simeq \Ext^{0}_{\overline{R}}(M,\omega_{\overline{R}}),$$ where the last isomorphism deduces from \cite[Theorem 3.3.5]{BH}, e.g., $\omega_{\overline{R}}\simeq \omega_R/\underline{x}\omega_R$. In a similar vein
there is an isomorphism $\Ext^{g}_R(M,R)\simeq  \Ext^{0}_{\overline{R}}(M,\overline{R}).$

Claim:  One has  $\dim(\Hom_{\overline{R}} (M,\overline{R}))=\dim(\Hom _{\overline{R}} (M,\omega_{\overline{R}}))$. To this end we show they have the same associated prime ideals. As $\overline{R}$ and $\omega_{\overline{R}}$
are Cohen-Macaulay over $\overline{R}$, their associated prime ideals are the minimal primes of their support. On the other hand $\overline{R}$ and $\omega_{\overline{R}}$
have same  support, because $(\omega_{\overline{R}})_{\fp}=\omega_{\overline{R}_{\fp}}$, see \cite[Theorem 3.3.5]{BH}.
Thus, $\Ass_{\overline{R}}(\overline{R})=\Ass_{\overline{R}}(\omega_{\overline{R}})$. In view of \cite[Sublemma 3.2]{41}, $$\Ass_{\overline{R}}(\Hom_{\overline{R}}(M,\overline{R}))=\Supp_{\overline{R}}(M)\cap\Ass_{\overline{R}}(\overline{R})=\Supp_{\overline{R}}(M)\cap\Ass_{\overline{R}}(\omega_{\overline{R}})=
\Ass_{\overline{R}}(\Hom_{\overline{R}}(M,\omega_{\overline{R}})). $$

Combining these we get$$
\begin{array}{ll}
\dim_R (M)&= \dim(\Ext^{g}_R(M,\omega_R))\\&=\dim(\Hom _{\overline{R}} (M,\omega_{\overline{R}}))\\&=\dim(\Hom_{\overline{R}} (M,\overline{R}))\\&=\dim(\Ext^g_R(M,R)).
\end{array}$$
\end{proof}

We say a module $F$ is homologically zero if 
$\Ext^i_R(F,R)=0$ for all $i\geq 0$. 
We say a module is homologically  free if 
$\Ext^i_R(F,R)=\Ext^i_R(F,F)=0$ for all $i>0$.
\section{homological associated primes}

We start by a useful example and a motivational discussion:
\begin{example}\label{exaa}
Let $R:=\mathbb{Q}[[x,y,z]]$  and let $M:=R/(xy,xz)$. Then
$$\bigcup\Min(\Ext^i_R(M,R))=\Ass_R(M)=\{(x),(y,z)\}.$$
\end{example}

\begin{proof}
In view of  Example \ref{exa}, we have:\begin{equation*}
\Ext^i_R(M,R)\simeq \left\{
\begin{array}{rl}
\frac{(-y,z)}{(xy,xz)} & \  \   \   \   \   \ \  \   \   \   \   \ \text{if } i=1\\
 \frac{R}{(y,z)}&\  \   \   \   \   \ \  \   \   \   \   \ \text{if } i=2\\
0 & \  \   \   \   \   \ \  \   \   \   \   \ otherwise
\end{array} \right.
\end{equation*}

Thus, $\cup\Min(\Ext^i_R(M,R))=\Ass(M)$.
\end{proof}

\begin{discussion}\label{dim}(Grothendieck)
Let $(R,\fm)$ be a Cohen-Macaulay local ring of dimension $d$ with a canonical module. Let $M$ be a finitely generated module of dimension $n$ and depth $t$. The following  holds.
\begin{enumerate}
\item[i)] $\Ext^i_R(M,\omega_R)=0$ for $i\notin [d-n,d-t]$. Also, $\Ext^i_R(M,\omega_R)\neq0$ for $i=d-n$ and $i=d-t$.
\item[ii)] $\dim(\Ext^i_R(M,\omega_R))\leq d-i$.
\end{enumerate}
\end{discussion}

\begin{proposition}
Let $M$ and $N$ be finitely generated modules such that either $\pd(M)<\infty$ or $\id(N)<\infty$ over any commutative noetherian ring. Let $i\leq d$. Then $\dim(\Ext^{i}_R(M,N))\leq d-i$.
\end{proposition}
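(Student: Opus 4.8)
The plan is to reduce the global statement to a local computation at each point of the support and then feed it into the Auslander--Buchsbaum and Bass formulas, exactly as in the proof of Discussion 3.2(ii). First I would recall that $\dim(\Ext^{i}_R(M,N))=\sup\{\dim R/\fp:\fp\in\Supp(\Ext^{i}_R(M,N))\}$, so it is enough to prove $\dim R/\fp\le d-i$ for every $\fp\in\Supp(\Ext^{i}_R(M,N))$. Since $M$ is finitely generated, Ext commutes with localization, so $\Ext^{i}_R(M,N)_{\fp}\cong\Ext^{i}_{R_{\fp}}(M_{\fp},N_{\fp})\neq 0$. In particular $M_{\fp}\neq 0$ and $N_{\fp}\neq 0$, and the nonvanishing of this local Ext forces $i\le\pd_{R_{\fp}}(M_{\fp})$ in the first case and $i\le\id_{R_{\fp}}(N_{\fp})$ in the second case.

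Next I would bound these local homological dimensions by $\Ht\fp$. If $\pd_R(M)<\infty$, then $\pd_{R_{\fp}}(M_{\fp})<\infty$, so the Auslander--Buchsbaum formula applied to the noetherian local ring $R_{\fp}$ gives
$$\pd_{R_{\fp}}(M_{\fp})=\depth R_{\fp}-\depth_{R_{\fp}}(M_{\fp})\le\depth R_{\fp}\le\dim R_{\fp}=\Ht\fp.$$
If instead $\id_R(N)<\infty$, then $\id_{R_{\fp}}(N_{\fp})<\infty$, and since $N_{\fp}\neq 0$ Bass's theorem gives $\id_{R_{\fp}}(N_{\fp})=\depth R_{\fp}\le\dim R_{\fp}=\Ht\fp$. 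In either case we obtain $i\le\Ht\fp$.

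Finally I would combine this with the elementary inequality $\Ht\fp+\dim R/\fp\le\dim R=d$ (a chain below $\fp$ of length $\Ht\fp$ concatenated with a chain above $\fp$ of length $\dim R/\fp$ is a chain in $\Spec R$) to conclude $\dim R/\fp\le d-\Ht\fp\le d-i$. Taking the supremum over $\fp\in\Supp(\Ext^{i}_R(M,N))$ yields $\dim(\Ext^{i}_R(M,N))\le d-i$. The argument is routine and presents no real obstacle; the only two points that need care are the compatibility of Ext with localization, which relies on $M$ being finitely generated, and the appeal to Bass's theorem, which requires the local ring $R_{\fp}$ to be noetherian and $N_{\fp}$ to have finite injective dimension — both of which hold in our situation.
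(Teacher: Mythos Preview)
Your proof is correct and follows essentially the same approach as the paper. The paper argues the contrapositive---taking a prime $\fp$ with $\dim R/\fp>d-i$, deducing $\Ht\fp<i$, and concluding via Bass or Auslander--Buchsbaum that $\Ext^{i}_{R_{\fp}}(M_{\fp},N_{\fp})=0$---while you start from $\fp\in\Supp(\Ext^{i}_R(M,N))$ and derive $\dim R/\fp\le d-i$; the ingredients (localization of Ext, Auslander--Buchsbaum, Bass, and the inequality $\Ht\fp+\dim R/\fp\le d$) are identical.
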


\begin{proof}
 Without loss of generality we assume that $d:=\dim R$ is finite.
Suppose $\fp$ is of coheight $>d-i$. Suppose first that $\id(N)<\infty$. Due to a  theorem of Bass \cite[Theorem 3.1.17]{BH},  we observe that
 $$\id(N_{\fp})\leq\depth(R_{\fp})\leq\dim R_{\fp}=\Ht(\fp)\leq d-\dim R/\fp< d-(d-i)=i.$$ Thus $\Ext^{i}_{R_{\fp}}(M_{\fp},N_{\fp})=0$.  Suppose now that $\pd(M)<\infty$. By Auslander-Buchsbaum formula and in a similar way as above, $\pd(M_{\fp})<i$. Consequently, $\Ext^{i}_{R_{\fp}}(M_{\fp},N_{\fp})=0$. We show in each cases that
$\Supp(\Ext^{i}_R(M,N))$ is of dimension less or equal than $d-i$. This proves the desired fact.\end{proof}

\begin{example} The first item says that finitely generated assumption is really needed. The second item says that  finiteness of homological dimension is important:
\begin{enumerate}
	\item[i)] In view of  Example 2.2, $\Ext^1_\mathbb{Z}(\mathbb{Q},\mathbb{Z})$ is a nonzero rational vector space. So, $\dim(\Ext^1_\mathbb{Z}(\mathbb{Q},\mathbb{Z}))=1>0=d-1$.
\item[ii)] The finiteness of homological dimensions is important. Look at $R:=\mathbb{Q}[[X,Y]]/(X^2)$. It is easy to see $ \Ext^i_R(R/xR,R/xR)\simeq R/xR$  for all $i$. So, $\dim (\Ext^1_R(R/xR,R/xR))=1>0=d-1$.\end{enumerate}
\end{example}

In the above example $\Ext^i_R(R/(x),R)=0$ for all $i>0$. One may search the validity of  $\dim(\Ext^{i}_R(-,R))\leq d-i$. In general, this is not the case as the next example says.

\begin{example}
 Let $R:=\mathbb{Q}[X,Y,Z]/(X^2,XY,XZ)$.
 There is a finitely generated $R$-module $M$ such that $\dim(\Ext^{2}_R(M,R))>d-i=\dim R-2$.
 
\end{example}
 
\begin{proof}  This is a 2-dimensional ring and $\min(R)=\{(x)\}$.  Due to the formula $\rad(y,z)=(x,y,z)$, one can show that $\{y,z\}$ is a system of parameters. Since $\{y,z\}$ is not a regular sequence, $R$ is not Cohen-Macaulay. Set $M:=R/xR$. Note that neither $\pd(M)<\infty$ nor $\id(R)<\infty$. We are going to show $\dim (\Ext^2_R(M,R))=2>0=d-2$.
 We restate the 3 relations $x^2=xy=xz=0$ of $\{x,y,z\}$ by
 \begin{equation*}
\left(
\begin{array}{cccccc}
x & 0  & 0 \\
0 & x  & 0  \\
0 & 0  & x
\end{array} \right)\left(
\begin{array}{ccc}
x  \\
y  \\
z
\end{array} \right)=\left(
\begin{array}{ccc}
0  \\
0  \\
0
\end{array} \right).
\end{equation*}
  We restate the   Koszul relations of $\{x,y,z\}$ by  \begin{equation*}
\left(
\begin{array}{cccccc}
-y & -z & 0 \\
x  & 0  & -z\\
0 & x  & y
\end{array} \right)\left(
\begin{array}{ccc}
x  \\
y  \\
z
\end{array} \right)=\left(
\begin{array}{ccc}
0  \\
0  \\
0
\end{array} \right).
\end{equation*}
 Then we put all of these relations of $\{x,y,z\}$ to the following package \begin{equation*}
\textsc{A}:= \left(
\begin{array}{cccccc}
x & 0  & 0 & -y & -z & 0\\
0 & x  & 0 & x  & 0  & -z\\
0 & 0  & x &  0 & x  & y
\end{array} \right).
\end{equation*} Look at the free resolution of $M$:$$\begin{CD}
\cdots @>>> R^6 @>
A
>>  R^3 @>[x \quad
y  \quad
z  ]>>  R@>x>> R @>>> M@>>> 0.@.
\\
\end{CD}
$$Delete $M$ from the right and apply $\Hom_R(-,R)$ we get to the following complex$$\begin{CD}
0 @>>> R @>
x
>>  R @>[
x\quad
y \quad
z
]^t>>  R^3@>\textsc{A}^t>> R^6 .@.
\\
\end{CD}
$$ Let $\{a,b,c\}$ be such that $[a,b,c]\textsc{A}=0$. It is solution of the following system of  six equations \begin{equation*}
\left\{
\begin{array}{rl}
ax=bx=cx=0 & \\
-ay+bx=0 & \\
-az+cx=0 & \\
-bz+cy=0
\end{array} \right.
\end{equation*}It is easy to see that $\{(x,0,0),(0,x,0),(0,0,x),(0,y,z)\}$ are the solutions. Hence $$\Ext^2_R(M,R)=\frac{\ker( A^t)}{\im([x\quad y\quad z]^t)}\supseteq\frac{\langle(x,0,0),(0,x,0),(0,0,x),(0,y,z)\rangle}{(x,y,z)R}.$$
Thus,  $$(x)\subseteqq\Ann(\Ext^2_R(R/(x),R))\subseteq\Ann(\frac{\langle(x,0,0),(0,x,0),(0,0,x),(0,y,z)\rangle}{(x,y,z)R})=(x).$$We observe that $\dim(\Ext^2_R(M,R))=\dim R/(x)=2.$
\end{proof}

\begin{proposition}\label{coh}
Let $(R,\fm)$ be a local ring which is a homomorphic image of a Gorenstein ring
$(S,\fn)$  of dimension $d$. Let $M$ be a finitely generated $R$-module of dimension $n$. Then $\Ass_R(M)\subseteq\bigcup_{i=d-n}^n\min_R(\Ext^i_S(M,S)).$
The equality holds if $M$ is Cohen-Macaulay.
\end{proposition}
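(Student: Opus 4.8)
The plan is to reduce everything to the Gorenstein ring $S$, where the Ext-duality machinery from Discussion \ref{dim} applies cleanly. Since $R$ is a homomorphic image of $S$, any finitely generated $R$-module $M$ is a finitely generated $S$-module with $\dim_S M=\dim_R M=n$ and $\Ass_S(M)=\Ass_R(M)$, and moreover $\Ext^i_S(M,S)$ computed over $S$ agrees with the groups appearing in $\hass_R(M)$. Because $S$ is Gorenstein local of dimension $d$, it is in particular Cohen-Macaulay with canonical module $\omega_S=S$, so Discussion \ref{dim}(i) gives $\Ext^i_S(M,S)=0$ unless $i\in[d-n,d-t]\subseteq[d-n,d]$; combined with the trivial fact that the grade of $M$ over $S$ equals $d-n$ (grade conjecture is automatic over the Cohen-Macaulay, in fact Gorenstein, ring $S$), the only possibly nonzero Ext-modules sit in degrees $d-n\le i\le d$. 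This already restricts the union on the right-hand side, although I will need to observe separately that $\Min_S(\Ext^i_S(M,S))$ for $i>n$ contributes nothing to the containment we want — indeed such primes $\fp$ have $\dim S/\fp\le d-i<d-n=$ well, this needs care; see below.

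For the containment $\Ass_R(M)\subseteq\bigcup_i\Min_S(\Ext^i_S(M,S))$, I would argue prime-by-prime. Let $\fp\in\Ass_S(M)$. Localizing at $\fp$ is harmless: $S_\fp$ is again Gorenstein local, $\fp S_\fp\in\Ass_{S_\fp}(M_\fp)$, and $\Ext^i_S(M,S)_\fp\simeq\Ext^i_{S_\fp}(M_\fp,S_\fp)$ by finiteness. So it suffices to show that, over a Gorenstein local ring $(S,\fn)$, if $\fn\in\Ass_S(M)$ then $\fn\in\Min_S(\Ext^j_S(M,S))$ for some $j$; concretely, $\fn$ being associated to $M$ forces $\depth_S M=0$, hence by Discussion \ref{dim}(i) (with $t=0$) we get $\Ext^{d}_S(M,S)\neq 0$, and $\Ext^{d}_S(M,S)$ is annihilated by $\fn$ (being of dimension $\le d-d=0$ by Discussion \ref{dim}(ii)), so $\fn\in\Supp\Ext^{d}_S(M,S)$ and is necessarily minimal there. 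This handles every associated prime after localization, giving the claimed inclusion; tracking the degree $j=\dim S_\fp=\Ht\fp$ through the delocalization shows $j$ ranges in $[d-n,n]$, matching the indexing in the statement (using $\Ht\fp\le n$ for $\fp\in\Supp M$ and $\Ht\fp\ge d-n$ since… actually $d-n\le\Ht\fp$ requires the unmixedness available over the Cohen-Macaulay $S$, namely $\dim S/\fp+\Ht\fp=d$ combined with $\dim S/\fp\le n$).

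For the reverse inclusion when $M$ is Cohen-Macaulay, here $t=n$ so Discussion \ref{dim}(i) collapses the range to the single degree $i=d-n=\grade M=:g$: thus $\Ext^i_S(M,S)=0$ for $i\neq g$, and the only term on the right is $\Min_S(\Ext^g_S(M,S))$. By the Ext-duality for Cohen-Macaulay modules over Gorenstein rings (\cite[3.3.10]{BH}, as already cited in Example \ref{exa}(iii)) one has $\Supp_S\Ext^g_S(M,S)=\Supp_S M$, and both $M$ and $\Ext^g_S(M,S)$ are Cohen-Macaulay $S$-modules of dimension $n$, so their minimal primes coincide with the minimal primes of their common support, and since a Cohen-Macaulay module has no embedded primes, $\Min_S(\Ext^g_S(M,S))=\Ass_S(\Ext^g_S(M,S))=\Ass_S(M)=\Ass_R(M)$, giving equality. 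The main obstacle I anticipate is not any single deep step but the bookkeeping of the index range $[d-n,n]$: making sure that after localizing at an arbitrary $\fp\in\Ass M$ and producing a nonzero Ext in degree $\Ht\fp$, this degree genuinely lies in $[d-n,n]$ — this is exactly where one invokes that $S$ is catenary and equidimensional-in-the-relevant-sense (true since $S$ is Cohen-Macaulay local), so that $\Ht\fp=d-\dim S/\fp$ and $\dim S/\fp\le n$; one must also double-check the edge case where $M$ fails to be unmixed, so that some associated prime $\fp$ has $\dim S/\fp<n$ and contributes in degree $>d-n$, which is consistent with the stated union starting at $d-n$ only because such $\fp$ still satisfies $\Ht\fp\le n$.
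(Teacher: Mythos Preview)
Your argument is essentially the paper's: take $\fp\in\Ass_R(M)$, pass to its preimage $\fq$ in $S$, localize, observe $\depth M_\fq=0$, and then Discussion~\ref{dim}(i) forces $\Ext^{\dim S_\fq}_{S_\fq}(M_\fq,S_\fq)\neq 0$ while Discussion~\ref{dim}(ii) pins $\fp$ down as a minimal prime of $\Ext^{\Ht\fq}_S(M,S)$. The paper dresses the same computation up via local duality, writing $\Ann_R\Ext^{d-i}_S(M,S)=\Ann_R H^i_\fm(M)$, but the substantive step is identical; for the Cohen--Macaulay equality the paper just notes that the implication $(\dagger)$ reverses, whereas you reach the same conclusion through the support identity $\Supp\Ext^g_S(M,S)=\Supp M$, which is equally valid.

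The one genuine problem is exactly the issue you flag as the ``main obstacle'': your assertion that $\Ht\fq\le n$ for $\fq$ lying over an associated prime of $M$ is simply false. Take $S=R=k[[x,y,z]]$ and $M=R/(x)\oplus R/\fm$; then $n=2$, $d=3$, $\Ass M=\{(x),\fm\}$, and $\fm$ has height $3>n$; it shows up only in $\Min\Ext^3_R(M,R)$, outside the stated range $[d-n,n]=[1,2]$. The paper's own proof never verifies the upper bound $n$ either: the index it produces is $d-\dim(R/\fp)$ with $0\le\dim(R/\fp)\le n$, which lands in $[d-n,d]$. The upper limit $n$ in the displayed union appears to be a misprint for $d$; once that is corrected, your proof (and the paper's) goes through without the unjustified height bound.
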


\begin{proof}We apply an idea  taking from Grothendieck \cite[Proposition 6.6]{41}.
By $(-)^v$ we mean the Matlis dual functor. We view $M$ as an $S$-module via the map $S\to R$. In the light of the  local duality theorem, we have $H^i_{\fm}(M)^v\simeq\Ext^{d-i}_S(M,S)$. Also, $H^i_{\fm}(M)\simeq H^i_{\fn}(M)$ as an $R$-module. Since $H^i_{\fm}(M)^{vv}\simeq H^i_{\fm}(M)$ we observe that $$\fa_i(M):=\Ann_R (H^i_{\fm}(M))=\Ann_R(H^i_{\fn}(M))=\Ann_R (\Ext^{d-i}_S(M,S)).$$ In view of  Discussion \ref{dim}(ii), $\dim\Ext^{d-i}_S(M,S)\leq i$. Let $\fp\in \Ass_R(M)$ of dimension $i$
and let $\fq$ be its image in $S$. We know $\dim(S_{\fq})=d-i$. By Discussion \ref{dim}(i) $\Ext^{d-i}_{S_{\fq}}(M_{\fq},S_{\fq})\neq 0$ provided $\depth(M_{\fq})=0$. We denote this property by $(\dagger)$.   Then
$$
\begin{array}{ll}
\fp\in \Ass(M)&\Longleftrightarrow \depth(M_{\fp})=0\\&\Longleftrightarrow \depth(M_{\fq})=0\\&\stackrel{(\dagger)}\Longrightarrow \Ext^{d-i}_{S_{\fq}}(M_{\fq},S_{\fq})\neq 0\\&\Longleftrightarrow \Ext^{d-i}_{S}(M,S)_{\fp}\neq 0.
\end{array}$$ 
As minimal elements of support are the associated
primes, we get the first claim by recalling again that $\dim(\Ext^{d-i}_S(M,S))\leq i.$
Now, suppose that $M$ is Cohen-Macaulay. In view of Discussion \ref{dim}(i),
the implication $(\dagger)$ holds in both directions. So, $\Ass_R(M)=\bigcup_{i=d-n}^n\min_R(\Ext^i_S(M,S)).$
\end{proof}

\begin{corollary} Let $M$ be finitely generated. Then
$ \Ass(M)\subseteq \hass(M)$.
\end{corollary}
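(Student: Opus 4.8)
The plan is to reduce the statement to Proposition \ref{coh}. Recall that $\hass_R(M)=\bigcup_i\Ass_R(\Ext^i_S(M,S))$, where $R$ is a homomorphic image of a Gorenstein ring $S$. The first point to settle is that the definition of $\hass_R(M)$ does not depend on the particular Gorenstein presentation chosen; since only existence of one such presentation is needed for the statement, I would simply fix one Gorenstein ring $(S,\fn)$ of dimension $d$ together with a surjection $S\to R$, which exists by the standing hypothesis of Section 3. Then $M$ becomes a finitely generated $S$-module of some dimension $n=\dim_R M=\dim_S M$.

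Next I would invoke Proposition \ref{coh} directly. It gives the inclusion
\[
\Ass_R(M)\subseteq\bigcup_{i=d-n}^{n}\Min_R(\Ext^i_S(M,S)).
\]
Since $\Min_R(N)\subseteq\Ass_R(N)$ for any finitely generated module $N$ (minimal primes of the support are associated primes), we get
\[
\bigcup_{i=d-n}^{n}\Min_R(\Ext^i_S(M,S))\subseteq\bigcup_{i=d-n}^{n}\Ass_R(\Ext^i_S(M,S))\subseteq\bigcup_{i\geq 0}\Ass_R(\Ext^i_S(M,S))=\hass_R(M).
\]
Chaining these inclusions yields $\Ass_R(M)\subseteq\hass_R(M)$, which is exactly the claim.

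There is essentially no obstacle here: the corollary is a formal consequence of Proposition \ref{coh} once one notes that $\Min\subseteq\Ass$. The only mild subtlety is the well-definedness of $\hass_R(M)$, i.e.\ that the set does not depend on the Gorenstein presentation; but since the statement to be proved requires only \emph{one} presentation to witness the inclusion, this is not actually needed, and I would either omit it or dispatch it with one sentence (any two presentations can be dominated by a common one, e.g.\ by adjoining polynomial variables). So the proof is two or three lines invoking Proposition \ref{coh} and the elementary fact about minimal primes.
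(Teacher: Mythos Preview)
Your proposal is correct and matches the paper's approach: the paper states this corollary without proof immediately after Proposition~\ref{coh}, treating it as an obvious consequence. Your chain $\Ass(M)\subseteq\bigcup_i\Min(\Ext^i_S(M,S))\subseteq\bigcup_i\Ass(\Ext^i_S(M,S))=\hass(M)$ is exactly the intended deduction.
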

\begin{enumerate}
	\item[] First proof:  Clear from  Proposition \ref{coh}.
	\item[] Second proof: Here we use well-known results from the theory
	of attach prime ideals and left the routine details to the reader. Recall that $$\Ass(M)\subset\bigcup_i\Att(H^i_{\fm}(M))=\bigcup_i\Att(\Ext^{d-i}_R(M,R)^v)=
	\bigcup_i\Ass(\Ext^{d-i}_R(M,R))=\hass(M).$$
\end{enumerate}

Recall from \cite{vas} the \textit{hidden associated prime ideals} of $M$ is $\hid_R(M):=\hass_R(M)\setminus \Ass_R(M)$.
Now, we recover  a result of Foxby \cite[Proposition 3.4b]{ff} via three different arguments:

\begin{corollary}\label{refox}
Let $R$ be a Gorenstein local ring and $M$ a Cohen-Macaulay module. Then $\hid(M)=\emptyset$.
\end{corollary}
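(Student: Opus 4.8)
The plan is to deduce this directly from the preceding material, since over a Gorenstein ring the homological associated primes $\hass_R(M) = \bigcup_i \Ass_R(\Ext^i_R(M,R))$ agree with $\eass_R(M)$, and we want to show every prime appearing in some $\Ext^i_R(M,R)$ is already in $\Ass_R(M)$. The first argument I would give is to invoke Proposition \ref{coh} with $S = R$: since $R$ is Gorenstein it is certainly a homomorphic image of a Gorenstein ring (itself), so $\Ass_R(M) \subseteq \bigcup_{i=d-n}^{n}\Min_R(\Ext^i_S(M,S))$, and the Proposition asserts equality precisely when $M$ is Cohen-Macaulay. Combined with $d = \dim R$ and $n = \dim M$, and the fact that $\Ext^i_R(M,R) = 0$ outside the range $i \in [d-n,\, d-t]$ with $t = \operatorname{depth} M = n$ (Discussion \ref{dim}(i), valid here since a Gorenstein ring is Cohen-Macaulay with canonical module $\omega_R \simeq R$), the whole Ext-family collapses to the single index $i = g = d - n$. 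Thus $\hass_R(M) = \Ass_R(\Ext^g_R(M,R))$, and by the equality case of Proposition \ref{coh} this set is exactly $\bigcup_i \Min_R(\Ext^i_S(M,S)) = \Ass_R(M)$, whence $\hid_R(M) = \hass_R(M)\setminus\Ass_R(M) = \emptyset$.

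A second, more self-contained argument I would include uses Ext-duality instead. Over a Gorenstein $R$, a Cohen-Macaulay module $M$ of grade $g$ satisfies $M \simeq \Ext^g_R(\Ext^g_R(M,R),R)$ (as recorded in Example \ref{exa}(iii), citing \cite[3.3.10]{BH}), and moreover $N := \Ext^g_R(M,R)$ is again Cohen-Macaulay of grade $g$ with $\Ext^i_R(M,R) = 0$ for $i \neq g$. From $M \simeq \Ext^g_R(N,R)$ one gets $\Ass_R(M) = \Ass_R(\Ext^g_R(N,R))$; but applying Example \ref{exa}(iii) once more (now to $N$) together with the vanishing of the off-diagonal Ext's, one sees $\Ass_R(\Ext^g_R(N,R)) = \bigcup_i \Ass_R(\Ext^i_R(N,R))$. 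Since $N = \Ext^g_R(M,R)$, this says $\Ass_R(M) = \bigcup_i \Ass_R(\Ext^i_R(\Ext^g_R(M,R),R))$; unwinding the reflexive duality identifies the right-hand side with $\bigcup_i \Ass_R(\Ext^i_R(M,R)) = \hass_R(M)$.

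The only real point requiring care — and what I expect to be the main obstacle in writing this cleanly — is justifying that the Ext-family of a Cohen-Macaulay module over a Gorenstein ring is concentrated in the single degree $g$, so that "the $\hass$" and "$\Ass$ of the one nonzero Ext" literally coincide rather than merely being related by inclusions. This follows from Discussion \ref{dim}(i) with $\omega_R \simeq R$, using $n = \operatorname{depth} M$, but one should state explicitly that $M$ Cohen-Macaulay forces $d - n = d - t$ so that the interval $[d-n, d-t]$ is a single point; with that observation in hand both arguments go through, and the first (via Proposition \ref{coh}) is the shorter one to present.
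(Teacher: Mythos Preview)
Your first argument follows the paper's first proof (invoke Proposition~\ref{coh} with $S=R$), but it skips the one step the paper makes explicit. Proposition~\ref{coh} in the Cohen--Macaulay case gives
\[
\Ass_R(M)=\bigcup_i\Min_R\bigl(\Ext^i_R(M,R)\bigr)=\Min_R\bigl(\Ext^g_R(M,R)\bigr),
\]
whereas $\hass_R(M)=\Ass_R\bigl(\Ext^g_R(M,R)\bigr)$. Your sentence ``by the equality case of Proposition~\ref{coh} this set is exactly $\bigcup_i\Min_R(\Ext^i_S(M,S))$'' silently asserts $\Ass=\Min$ for $\Ext^g_R(M,R)$, which is not part of Proposition~\ref{coh}. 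The paper fills this by noting that $\Ext^g_R(M,R)$ is itself Cohen--Macaulay, hence has no embedded primes; you identify the ``single degree'' issue as the only delicate point, but the unmixedness of the surviving Ext is equally essential and is what actually closes the argument.

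Your second argument is circular. You correctly obtain $\Ass_R(M)=\bigcup_i\Ass_R(\Ext^i_R(N,R))$ with $N=\Ext^g_R(M,R)$, but the right-hand side is just $\Ass_R(\Ext^g_R(N,R))=\Ass_R(M)$ again. The ``unwinding the reflexive duality'' step then claims this equals $\bigcup_i\Ass_R(\Ext^i_R(M,R))=\Ass_R(N)$; that is precisely the statement $\Ass_R(M)=\Ass_R(N)$ you are trying to prove. The duality $M\leftrightarrow N$ only tells you the problem is symmetric, not that it is solved. A working version of the duality route is the paper's second proof: from $\Supp(M)=\Supp(\Ext^g_R(M,R))$ (Corollary~\ref{jesse} or Example~\ref{exa}(iii)) and the fact that both $M$ and $\Ext^g_R(M,R)$ are Cohen--Macaulay, one gets $\Ass(M)=\Min(\Supp M)=\Min(\Supp\Ext^g_R(M,R))=\Ass(\Ext^g_R(M,R))=\hass_R(M)$.
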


\begin{enumerate}
	\item[] First proof:  As $M$ is Cohen-Macaulay, $\Ext^i_R(M,R)$ is either zero or Cohen-Macaulay.
	This may be respell by  $\min (\Ext^i_R(M,R))=\Ass_R (\Ext^i_R(M,R))$.
	Proposition \ref{coh} yields the claim.
	\item[] Second proof: Apply Corollary \ref{jesse2}.
	\item[] Third proof: Use the concept of attach prime ideals. We  left the routine details to the reader.
\end{enumerate}

The Cohen-Macaulay assumption is important:

\begin{example} Let $M$ be nonfree and torsion-free over a local domain. Assume in addition that one of the following conditions hold:
\begin{enumerate}
	\item[i)] $R$ is 2-dimnsional and regular.
	 \item[ii)]$R$ is 3-dimnsional and regular and $M$ is reflexive.
	 \item[iii)] $M$ is locally free over the punctured spectrum.
 \end{enumerate}
Then
$\hid(M)=\{\fm\}$.
\end{example}

\begin{proof}We only  present the proof of i).
It follows that $\pd(M)=1$ and that $M$ is locally free on the punctured spectrum.
From this, $\Ext^{>1}_R(M,R)=0$ and that 	$\Ext^{1}_R(M,R)\neq0$ is of finite
length. Over any  2-dimensional regular local ring, any reflexive module is free.
From this,
$\Hom_R(M,R)$ is free. In sum,
$\cup_i\Ass_R (\Ext^i_R(M,R))=\{0\}\cup\{\fm\}$ and $\Ass(M)=\{0\}$.
Therefore,	$\hid(M)=\{\fm\}$.
\end{proof}

Recall  that  $M_{(i_1,\ldots,i_p)p}:=\Ext^{i_1}_S(\Ext^{i_2}_S(\ldots(\Ext^{i_p}_S(M,S),\ldots, S),S).$
Here, we drop the later $p$ from $(i_1,\ldots,i_p)p$ if there is no danger of confusion.

\begin{corollary}\label{refox1}
Let $(R,\fm)$ be a Gorenstein  and $M$ Cohen-Macaulay. Then $$\cup_{p}\cup_{(i_1,\ldots,i_p)}\Ass(M_{(i_1,\ldots,i_p)})=\Ass(M).$$
\end{corollary}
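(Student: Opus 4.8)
The plan is to reduce the iterated-Ext statement to a single application of Corollary \ref{refox} by an induction on the number of iterations $p$. The starting observation is that over a Gorenstein local ring, if $M$ is Cohen--Macaulay of grade $g$, then for each $i$ the module $\Ext^i_R(M,R)$ is either zero or Cohen--Macaulay (indeed, by the structure of minimal free resolutions over a Gorenstein ring, $\Ext^i_R(M,R)\ne 0$ exactly for $i=g$, and $\Ext^g_R(M,R)$ is Cohen--Macaulay of the same grade; this is the same fact already invoked in the first proof of Corollary \ref{refox}). Hence each module $M_{(i_1,\ldots,i_p)}$ obtained by iterating $\Ext^{i_j}_R(-,R)$ is again either zero or a Cohen--Macaulay $R$-module. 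So the whole family $\{M_{(i_1,\ldots,i_p)}\}_{p,(i_1,\ldots,i_p)}$ stays inside the class to which Corollary \ref{refox} applies.

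The key steps, in order, are as follows. First I would record that $\Ass(M_{()})=\Ass(M)$ (the empty tuple, $p=0$), so the right-hand side is contained in the left-hand side. Second, for the reverse inclusion, fix a tuple $(i_1,\ldots,i_p)$ with $p\geq 1$ and set $N:=M_{(i_2,\ldots,i_p)}$, so that $M_{(i_1,\ldots,i_p)}=\Ext^{i_1}_R(N,R)$. By the induction hypothesis $N$ is either zero (in which case $M_{(i_1,\ldots,i_p)}=0$ and contributes nothing) or Cohen--Macaulay. If $N$ is Cohen--Macaulay, then by Proposition \ref{coh} (applied with $S=R$, which is legitimate since $R$ is Gorenstein, so $\hass$ is computed from $\Ext^\bullet_R(-,R)$) together with the fact that $\Ext^{i_1}_R(N,R)$ is itself Cohen--Macaulay, we get
\[
\Ass\bigl(\Ext^{i_1}_R(N,R)\bigr)\subseteq\Min\bigl(\Ext^{i_1}_R(N,R)\bigr)\subseteq\hass_R(N)=\Ass_R(N),
\]
where the last equality is Corollary \ref{refox} ($\hid_R(N)=\emptyset$) applied to the Cohen--Macaulay module $N$. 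Thus every associated prime that appears at level $p$ already appears at level $p-1$, and downward induction collapses the union over all $p$ to $\Ass(M)$.

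The step I expect to be the main obstacle — or at least the one needing the most care — is the bookkeeping that guarantees the induction is well-founded and that at each stage one is genuinely allowed to invoke Corollary \ref{refox}: one must check that $\Ext^{i_1}_R(N,R)$ is Cohen--Macaulay \emph{as an $R$-module} (not merely over some quotient), and that a nonzero $\Ext^{i_1}_R(N,R)$ forces $i_1$ to equal the grade of $N$, so that only finitely many tuples give nonzero modules and the union is actually finite. All of this follows from the rigidity of Ext over a Gorenstein ring together with local duality, exactly as used in the first proof of Corollary \ref{refox}; once it is in place, the inclusion $\subseteq$ is immediate and the inclusion $\supseteq$ is the trivial $p=0$ term, completing the proof.
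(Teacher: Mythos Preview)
Your argument is correct and is precisely the approach the paper intends: the paper states Corollary \ref{refox1} immediately after Corollary \ref{refox} without proof, and the implicit argument is exactly the induction on $p$ you give, using that over a Gorenstein local ring each $\Ext^i_R(-,R)$ sends a Cohen--Macaulay module to zero or to a Cohen--Macaulay module, so Corollary \ref{refox} applies at every stage. One small clean-up: your chain $\Ass(\Ext^{i_1}_R(N,R))\subseteq\Min(\Ext^{i_1}_R(N,R))\subseteq\hass_R(N)$ is really an equality at the first step (since $\Ext^{i_1}_R(N,R)$ is Cohen--Macaulay or zero), and the passage to $\Ass_R(N)$ uses the equality case of Proposition \ref{coh} for the Cohen--Macaulay module $N$ rather than just the inclusion---but the logic is sound.
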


\begin{example}\label{exaaa}We revisit Example \ref{exa}.
Let $S:=\mathbb{Q}[[x,y,z]]$  and let $M:=S/(xy,xz)$. Let $p\geq1$. Then\begin{equation*}
M_{(i_1,\ldots,i_{p-1},2)}:=\left\{
\begin{array}{rl}
S/(y,z) & \text{if } i_1=\ldots=i_{p-1}=2\\
0 & \text{otherwise }
\end{array} \right.
\end{equation*}
\end{example}

\begin{proof}
In view of  Example \ref{exa},
$\Ext^2_S(M,S)\simeq
 \frac{S}{(y,z)}$. Since $y,z$ is  a regular sequence on $S$, from its Koszul complex we obtain that $\Ext^i_S(\Ext^2_S(M,S),S)\simeq
 \frac{S}{(y,z)}\delta_{2,i}$, where $\delta$ is the Kronecker delta. From this we get the claim.
\end{proof}

\begin{corollary}\label{refox2}
Let $R$ be a Gorenstein local ring, $M$ a finitely generated module and $i$ be any integer. Then $$\bigcup_{p=0}^{\infty}\Ass(M_{(i,\ldots,i)p})=\bigcup_{p=0}^3\Ass(M_{(i,\ldots,i)p}).$$ 
\end{corollary}

\begin{proof}
	Let $p>3$. We claim that $M_{(i,i,i)}\simeq M_{(i,\ldots,i)p}$ if $p$ is odd and $M_{(i,i)}\simeq M_{(i,\ldots,i)p}$ if $p$ is even. We proceed by induction on $i$. Let $p=4$.
It is enough to  revisit  Bridger's amusing formula: $M_{(i,i)}\simeq M_{(i,i,i,i)}$. Let $p=5$. 
We apply $\Ext^i_R(-,R)$ to $M_{(i,i)}\simeq M_{(i,i,i,i)}$ to see
 $M_{(i,i,i)}\simeq M_{(i,i,i,i,i)}$.
Now suppose, inductively, that $p\geq 6$ and assume the claim for all integer  smaller than $p$. 
First, suppose $p$ is odd. By inductive hypothesis,
$M_{(i,i)}\simeq M_{(i,\ldots,i)p-1}$. We apply $\Ext^i_R(-,R)$ to it, to deduce that $M_{(i,i,i)}\simeq M_{(i,\ldots,i)p}$. Finally, suppose  $p$ is even. By induction,
$M_{(i,i,i)}\simeq M_{(i,\ldots,i)p-1}$. We apply $\Ext^i_R(-,R)$ to it, to deduce that $M_{(i,i,i,i)}\simeq M_{(i,\ldots,i)p}$. We combine this along with Bridger's amusing formula
to deduce that $$M_{(i,i)}\simeq M_{(i,i,i,i)}\simeq M_{(i,\ldots,i)p}.$$  The claim is now proved. In view
of this, we observe that $\bigcup_{p=0}^3\Ass(M_{(i,\ldots,i)p})=\bigcup_{p=0}^{\infty}\Ass(M_{(i,\ldots,i)p}).$
\end{proof}

\section{homological annihilators}
Our aim in this section is to find relation between different notion of homological annihilators and connect them to the classical annihilator. Our list of homological annihilators is as follows:
\begin{enumerate}
	\item[i)] $\beta(M):=\{x\in R:M_x\textit{ is }R_x\textit{-projective}\}$   
	\item[ii)]  $\alpha(M):=\bigcap_{\Ann(\bigwedge^i(M))\neq  0}\Ann(\bigwedge^i(M)) $
	\item[iii)]
	$\gamma(M):=\bigcap _{i>0}\rad(\Ann_R\Ext^i_R(M,R))$ 	\item[iv)]$\Hann(M):=\prod_{i=0}^d\Ann_R\Ext^i_R(M,R)$.
\end{enumerate}
We need to find nontrivial annihilators  of higher Ext-modules:
\begin{proposition}
	Let $M$ be finitely generated and $L$ be any module. Then each element of $\Ext^i_R(M,L) $ has a nontrivial annihilation for all $i>0$.
\end{proposition}

\begin{proof}
	Suppose on the contradiction that there is  $x\in \Ext^i_R(M,L)$ which is a faithful element. Then
	$$R=\frac{R}{(0:x)}   \simeq   xR\hookrightarrow \Ext^i_R(M,L).$$ We bring the following claim:
	\begin{enumerate}
		\item[Claim A)] Let $M$ be a finitely generated module over any commutative ring and let $L$ be any module. Then $\Ext^i_R(M,L)$ has no nonzero projective submodule for all $i>0$. \item[ ]Indeed, 	we argue by induction on $i$.  The case $i=1$ is in \cite[Theorem 4.1]{comment}. We make use of the standard shifting
		isomorphisms. Let $E(L)$ be the injective envelope of $L$.
		For the higher ext, its enough to look at $0\to L\to E(L)\to E(L)/L\to 0$ and apply the induction hypothesis throughout  natural identification $\Ext^{i-1}_R(M,E(L)/L)\simeq \Ext^i_R(M,L)$ for all $i>1$.
	\end{enumerate}
In view of  Claim A) we know that $\Ext^i_R(M,L)$ has no nonzero projective submodule.
	This yields a contradiction that we search for it.
\end{proof}

\begin{example}
Let $R:=\mathbb{Q}[[x,y,z]]$  and set $M:=R/(xy,xz)$. One has  $$\gamma(M)=\Ann(M)=\Hann(M)=\beta(M)=\alpha(M).$$Also, $\gamma(xy,xz)\neq\Ann(xy,xz)$.
\end{example}

\begin{proof}
In the case of modules of finite projective dimension, the claims $\gamma(M)=\beta(M)=\alpha(M)$
is in \cite{BB}. In view of Example \ref{exa}, $\gamma(M)=\Ann(M)=\Hann(M)$. Again, Example \ref{exa} implies that $\gamma(xy,xz)\neq\Ann(xy,xz)$.
\end{proof}

\begin{observation} 
	If  $M$ is  finitely generated over an integral domain, then $0\neq\gamma(M)$ and $0\neq\beta(M)$.
\end{observation}

\begin{proof}By generic of freeness, there is  $a\in R$ such that $M_{a}$ is free as an $R_a$-module. Let $x$ be a suitable power of $a$. Due to \cite{BB},  $x\Ext^i_R(M,R)=0$ for all $i>0$. In particular, $\gamma(M)$ and $\beta(M)$ are nonzero.
\end{proof}

The finitely generated assumption is important:
With the same notation as of Example 2.1,
	$\Ext^1_R(F/R,R)\simeq R$. Thus, $\gamma(F/R)= 0$.

\begin{proposition}\label{cor}
Let $M$ be a finitely generated module and of positive grade. Then $\gamma(M)=\rad(\Ann(M))$.
\end{proposition}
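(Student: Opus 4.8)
The plan is to identify the two radical ideals through their vanishing loci. First note that $\gamma(M)=\bigcap_{i>0}\rad\bigl(\Ann_R\Ext^i_R(M,R)\bigr)$ is an intersection of radical ideals and hence radical; therefore it suffices to prove $\V(\gamma(M))=\Supp(M)=\V(\Ann(M))$, for then $\gamma(M)=\rad(\gamma(M))=\rad(\Ann(M))$.

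For the inclusion $\V(\gamma(M))\subseteq\Supp(M)$, I would invoke the containment $\Supp(\Ext^i_R(M,R))\subseteq\Supp(M)$ valid for every $i$ (this is fact 2) in the proof of Proposition \ref{f}); it gives $\rad(\Ann(M))\subseteq\rad(\Ann_R\Ext^i_R(M,R))$ for all $i>0$, hence $\rad(\Ann(M))\subseteq\gamma(M)$. For the reverse inclusion $\Supp(M)\subseteq\V(\gamma(M))$, I would apply Proposition \ref{f} with $N:=R$: since $M\otimes_R R\simeq M$, it yields $\Supp(M)=\bigcup_{i\geq 0}\Supp(\Ext^i_R(M,R))$. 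Here the hypothesis $\grade(M)>0$ is used: the grade of a finitely generated module equals $\inf\{i:\Ext^i_R(M,R)\neq 0\}$ (the formula already exploited in the proof of Lemma \ref{fg}), so positivity forces $\Hom_R(M,R)=0$ and the $i=0$ term drops out, leaving $\Supp(M)=\bigcup_{i>0}\Supp(\Ext^i_R(M,R))$. For each fixed $j>0$ one has $\gamma(M)\subseteq\rad(\Ann_R\Ext^j_R(M,R))$, whence $\Supp(\Ext^j_R(M,R))=\V\bigl(\rad(\Ann_R\Ext^j_R(M,R))\bigr)\subseteq\V(\gamma(M))$; taking the union over $j>0$ gives $\Supp(M)\subseteq\V(\gamma(M))$. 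Combining the two inclusions, $\V(\gamma(M))=\Supp(M)=\V(\Ann(M))$, and the conclusion follows since $\gamma(M)$ is radical.

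The argument is short once Proposition \ref{f} is available, and I do not expect a serious technical obstacle; the one point deserving care is the role of the hypothesis. It is exactly positivity of the grade that removes the $\Hom_R(M,R)$-term from the union computing $\Supp(M)$, and without it the equality $\V(\gamma(M))=\Supp(M)$ genuinely fails — for instance for a grade-zero module such as the ideal $(xy,xz)$ over $\mathbb{Q}[[x,y,z]]$, where $\gamma(xy,xz)\neq\Ann(xy,xz)$ as noted earlier in this section. This is the step I would single out as the heart of the matter.
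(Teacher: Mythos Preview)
Your proof is correct and follows essentially the same route as the paper's: both reduce the question to comparing vanishing loci via Proposition~\ref{f}, use the positive-grade hypothesis to discard the $i=0$ term, and invoke the trivial containment $\Supp(\Ext^i_R(M,R))\subseteq\Supp(M)$ for the reverse inclusion. The only cosmetic difference is that the paper first identifies the \emph{finite} intersection $\bigcap_{i=1}^{d}\rad(\Ann\Ext^i_R(M,R))$ with $\rad(\Ann(M))$ and then sandwiches the infinite intersection $\gamma(M)$ between them, whereas you work directly with $\V(\gamma(M))$ after observing that $\gamma(M)$ is radical; both arguments are equivalent.
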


\begin{proof}The grade condition says that $\Ext^0_R(M,R)=0  (\dag)$.  Set $\fc_i(M)=\rad\left(\Ann(\Ext^i_R(M,R))\right)$.
In view of  Proposition \ref{f}  we conclude that

$$
\begin{array}{ll}
\V(\Ann(M))&=\Supp(M)\\&\stackrel{(\dag)}= \bigcup_{i=1}^{d} \Supp(\Ext^i_R(M,R))\\&=\bigcup_{i=1}^{d} \V(\fc_i(M))\\&=\V(\bigcap _{i=1}^{d} \fc_i(M) ).
\end{array}$$
Since $\Ann(M)\subset\bigcap _{i=1}^{d} \fc_i(M)$, we get from the displayed item that $\bigcap _{i=1}^{d} \fc_i(M)=\rad(\Ann(M)).$
 Again we look at $\rad(\Ann(M))\subset  \fc_i(M)$. Immediately, we deduce that $$\rad(\Ann(M))\subset\bigcap _{i=1}^{\infty} \fc_i(M)\subset\bigcap _{i=1}^{d} \fc_i(M)=\rad(\Ann(M)).$$Consequently, $\gamma(M)=\bigcap _{i=1}^{\infty} \fc_i(M)=\rad(\Ann(M))$.
\end{proof}

\begin{example} The first item shows that finitely generated assumption  is important.
	The second item shows that positivity of grade  is important.
\begin{enumerate}
	\item[i)] Adopt the notation of Example 2.1. Note that $\Hom_R(F,R)=0$. Thus $F$ is of positive grade
and $\gamma(F)=R$. It remains to note that $\Ann(F)=0$.
	\item[ii)]  Let  $(R,\fm)$ be a  Cohen-Macaulay local domain of dimension $d>0$
	with isolated  Gorenstein singulaity and possessing a canonical module.  By Proposition \ref{omega}
$\Ext^+_R(\omega_R,R) $ are of finite length and there is an $i>0$ such that $\Ext^i_R(\omega_R,R)\neq0 $. From this $\gamma(\omega_R)=\bigcap _{i>0}\rad(\Ann_R\Ext^i_R(\omega_R,R))=\fm$. However, $\omega_R$
is faithful. Thus, 	$\gamma(\omega_R)=\fm\neq 0=\rad(\Ann(M))$.
\end{enumerate}
\end{example}

\begin{corollary}\label{corp}
Let $M$ and $N$ be modules of positive grade with the same support. Then $\gamma(M)=\gamma(N)$.
\end{corollary}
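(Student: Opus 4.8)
The plan is to reduce everything to Proposition \ref{cor}, which is already available. Since both $M$ and $N$ are finitely generated of positive grade, Proposition \ref{cor} applies to each of them separately and gives
\[
\gamma(M)=\rad(\Ann(M)),\qquad \gamma(N)=\rad(\Ann(N)).
\]
So the corollary will follow once we know that the support alone pins down the radical of the annihilator.

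The key step is the standard identity $\Supp(L)=\V(\Ann(L))$ for a finitely generated module $L$, together with the fact that for any ideal $\fa$ one has $\rad(\fa)=\bigcap_{\fp\in\V(\fa)}\fp$. Combining these, the hypothesis $\Supp(M)=\Supp(N)$ yields $\V(\Ann(M))=\V(\Ann(N))$, whence
\[
\rad(\Ann(M))=\bigcap_{\fp\in\V(\Ann(M))}\fp=\bigcap_{\fp\in\V(\Ann(N))}\fp=\rad(\Ann(N)).
\]
Chaining this with the two displayed equalities above gives $\gamma(M)=\gamma(N)$, as desired.

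There is essentially no obstacle here; the only point worth a remark is that, as in Proposition \ref{cor} and as illustrated by the preceding example (where $F$ has positive grade but $\gamma(F)=R\neq 0=\Ann(F)$), the finite generation of $M$ and $N$ is genuinely needed — both for the validity of $\gamma(-)=\rad(\Ann(-))$ and for the identity $\Supp(-)=\V(\Ann(-))$. Under the running convention of this section that all modules in sight are finitely generated, the argument above is complete.
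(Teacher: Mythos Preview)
Your proof is correct and is exactly the intended argument: the paper leaves Corollary~\ref{corp} without proof because it is an immediate consequence of Proposition~\ref{cor} together with the standard fact that $\Supp(L)=\V(\Ann(L))$ for finitely generated $L$, hence equal supports force equal radicals of annihilators. There is nothing to add.
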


\begin{example} The  positivity of grade  is important.
  Let  $(R,\fm)$ be a  Cohen-Macaulay local domain of dimension $d>0$
		with isolated  Gorenstein singulaity and possessing a canonical module. Note that $\omega_R$ and $R$
		have a same support. By the previous example, $\gamma(\omega_R)=\fm\neq R=\gamma(R)$.
\end{example}
\begin{corollary}\label{corp1}
Let $M$  be a finitely generated module of finite projective dimension. Then $\gamma(M)=\rad(\Ann(M))$ if and only if $M$ is of positive grade.
\end{corollary}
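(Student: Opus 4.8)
The ``if'' direction carries no homological hypothesis at all: if $M$ is finitely generated of positive grade, then Proposition \ref{cor} already gives $\gamma(M)=\rad(\Ann(M))$, so $\pd_R(M)<\infty$ is only used for the converse. For the ``only if'' direction the plan is to prove the contrapositive: assuming $\pd_R(M)<\infty$ and $\grade(M)=0$ (and $M\neq 0$; the case $M=0$ being vacuous), I will exhibit a prime lying in $\Supp(M)$ but outside $\bigcup_{i>0}\Supp(\Ext^i_R(M,R))$, which forces $\gamma(M)\supsetneq\rad(\Ann(M))$.

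First I would record the bookkeeping. Since $\Ann(M)$ annihilates every $\Ext^i_R(M,R)$, one always has $\rad(\Ann(M))\subseteq\fc_i(M):=\rad(\Ann(\Ext^i_R(M,R)))$ and hence $\rad(\Ann(M))\subseteq\gamma(M)=\bigcap_{i>0}\fc_i(M)$; taking $V(-)$ this reads $V(\gamma(M))=\bigcup_{i>0}\Supp(\Ext^i_R(M,R))$. On the other hand, applying Proposition \ref{f} with $N=R$ gives $\Supp(M)=\bigcup_{i\geq 0}\Supp(\Ext^i_R(M,R))=V(\rad(\Ann(M)))$. Consequently $\gamma(M)=\rad(\Ann(M))$ holds if and only if $\Supp(\Hom_R(M,R))\subseteq\bigcup_{i>0}\Supp(\Ext^i_R(M,R))$, so it suffices to violate this last inclusion.

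Now assume $\grade(M)=0$, i.e. $\Hom_R(M,R)\neq 0$, which means $\Ann(M)$ contains no $R$-regular element, so by prime avoidance $\Ann(M)\subseteq\fp$ for some $\fp\in\Ass(R)$. Then $\fp\in V(\Ann(M))=\Supp(M)$, so $M_{\fp}\neq 0$, and $\depth(R_{\fp})=0$. Applying the Auslander--Buchsbaum formula over $R_{\fp}$ (legitimate since $\pd_{R_{\fp}}(M_{\fp})\leq\pd_R(M)<\infty$) yields $\pd_{R_{\fp}}(M_{\fp})=\depth(R_{\fp})-\depth_{R_{\fp}}(M_{\fp})=-\depth_{R_{\fp}}(M_{\fp})\leq 0$, hence $M_{\fp}$ is free and nonzero. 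Therefore $\Ext^i_R(M,R)_{\fp}\cong\Ext^i_{R_{\fp}}(M_{\fp},R_{\fp})=0$ for all $i>0$, while $\Hom_R(M,R)_{\fp}\cong\Hom_{R_{\fp}}(M_{\fp},R_{\fp})\neq 0$. Thus $\fp\in\Supp(\Hom_R(M,R))$ but $\fp\notin\bigcup_{i>0}\Supp(\Ext^i_R(M,R))=V(\gamma(M))$, so $V(\gamma(M))\subsetneq\Supp(M)=V(\rad(\Ann(M)))$; since $\gamma(M)$ is radical this gives $\gamma(M)\supsetneq\rad(\Ann(M))$, in particular $\gamma(M)\neq\rad(\Ann(M))$. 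This proves the contrapositive, and combined with Proposition \ref{cor} the equivalence follows.

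I do not foresee a genuine obstacle here: the one load-bearing idea is the localization at an associated prime of $R$, which collapses the Auslander--Buchsbaum formula and forces $M_{\fp}$ to be free; everything else is the support dictionary already packaged in Propositions \ref{f} and \ref{cor}. The only points demanding a little care are the direction of inclusion reversal under $V(-)$ and the degenerate case $\pd_R(M)=0$ (where $M$ is free nonzero, $\rad(\Ann(M))=\nil(R)\neq R=\gamma(M)$), which is in fact subsumed by the argument above.
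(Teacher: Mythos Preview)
Your argument is correct. The bookkeeping is sound (with the finiteness of the intersection defining $\gamma(M)$ coming from $\pd_R(M)<\infty$, which you are tacitly using when you write $V(\gamma(M))=\bigcup_{i>0}\Supp(\Ext^i_R(M,R))$), and the localization step at $\fp\in\Ass(R)$ with $\Ann(M)\subseteq\fp$ together with Auslander--Buchsbaum cleanly produces a prime in $\Supp(M)\setminus V(\gamma(M))$.

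Your route, however, is genuinely different from the paper's. The paper does not localize at an associated prime of $R$ nor invoke the Auslander--Buchsbaum formula; instead it appeals to a syzygy criterion from Bridger's thesis \cite[Theorem 7.57]{brid}: roughly, an element of $\gamma(M)$ that is regular forces $M$ to be a first syzygy, hence $M$ embeds in a free module and $\Hom_R(M,R)\neq 0$, giving $\grade(M)=0$. Your approach is more elementary and self-contained (it uses only standard facts already available in the paper), and it makes the role of the hypothesis $\pd_R(M)<\infty$ completely transparent: it is exactly what allows Auslander--Buchsbaum to collapse over a depth-zero localization. The paper's approach, by contrast, leans on an outside structural result and is somewhat tersely written; your argument would be easier for a reader to verify without chasing the Bridger reference.
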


\begin{proof}One direction is in Proposition \ref{cor}, without any assumption on the projective dimension. To see the other side implication, we may assume that $\gamma(M)\neq\rad(\Ann(M))$. Since  $\rad(\Ann(M)) \subsetneqq \gamma(M)$, there is an $M$-regular element in $\gamma(M)$.
In view of \cite[Theorem 7.57]{brid},
 $M$ is a 1-syzygy module. It turns out that $M$ is subset of a free module. Therefore, $\Hom_R(M,R)\neq 0$. So, $\grade(M)=0$.
\end{proof}

\begin{corollary}\label{corb}
Let $M$ and $N$ be modules of positive grade and of finite projective dimension with the same support.  Then $\beta(M)=\beta(N)$.
\end{corollary}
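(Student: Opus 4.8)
The plan is to funnel the whole statement through the Bridger ideal $\gamma(-)$, for which the required stability has already been proved. First I would invoke the Auslander--Buchsbaum identity recorded in Example 4.1, namely that for a finitely generated module of finite projective dimension one has $\beta(-)=\gamma(-)$ (this is \cite{BB}). Applying it to each of $M$ and $N$ — both finitely generated of finite projective dimension by hypothesis — reduces the desired equality $\beta(M)=\beta(N)$ to the equality $\gamma(M)=\gamma(N)$.

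Next I would quote Corollary \ref{corp}: two finitely generated modules of positive grade with the same support share the same ideal $\gamma$. Since $M$ and $N$ are assumed to be of positive grade and to have equal support, this gives $\gamma(M)=\gamma(N)$ at once, and chaining the identities produces $\beta(M)=\gamma(M)=\gamma(N)=\beta(N)$. (One can also bypass Corollary \ref{corp} and argue directly from Proposition \ref{cor}: positive grade makes $\gamma(M)=\rad(\Ann M)$ and $\gamma(N)=\rad(\Ann N)$ available, while $\Supp M=\Supp N$ forces $\V(\Ann M)=\V(\Ann N)$, hence $\rad(\Ann M)=\rad(\Ann N)$; this is the same chain.)

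There is essentially no obstacle to overcome; the only points that merit care are bookkeeping ones. The identity $\beta=\gamma$ genuinely needs finite projective dimension — this is precisely where \cite{BB} enters — and the positive grade hypothesis is exactly what makes Proposition \ref{cor}, and therefore Corollary \ref{corp}, applicable, since in grade zero one has $\Hom_R(M,R)\neq 0$ and the radical description of $\gamma$ can fail (compare Example 4.1, where $\gamma(xy,xz)\neq\Ann(xy,xz)$). Thus both hypotheses in the statement are used, each exactly once, and the proof is a two-line concatenation of the two earlier results.
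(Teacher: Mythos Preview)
Your proposal is correct and matches the paper's own argument almost verbatim: the paper invokes the identity $\beta(M)=\gamma(M)$ (citing \cite[Proposition~4]{brid} rather than \cite{BB}, but this is the same Bridger result) and then appeals to the support-sensitivity of $\gamma$ established in Corollary~\ref{corp}. The only difference is bibliographic, and your additional remarks on where each hypothesis is consumed are accurate.
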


\begin{proof}
In view of \cite[Proposition 4]{brid}, $\beta(M)=\gamma(M)$. The claim follows  by support sensitivity of $\gamma(-)$.
\end{proof}

Over polynomial rings, right hand side of ii)  is in  \cite[Page 215]{e}:

 \begin{lemma}
 	Let $R$ be local and $M$ be finitely generated. The following assertions hold:
 	\begin{enumerate}
 		\item[i)]   $\Hann(M)$ and $\Ann(M)$ have  same radical.
 		\item[ii)]  
  If $R$ is Gorenstein, then  $\Ann(M)^{\Gdim (M)-g+1}\subseteq\Hann(M)\subseteq\Ann(M).$ 
\end{enumerate}
\end{lemma}

\begin{proof}Let $d:=\dim R$ and $G:=\Gdim (M)$.
	
		\begin{enumerate}
		\item[i)] Recall that $\Hann(M):=\prod_{i=0}^d\Ann_R\Ext^i_R(M,R).$   By Proposition \ref{f}, $$ \V(\Ann(M))=\Supp(M)=\bigcup_{i=0}^{\infty} \Supp(\Ext^i_R(M,R))=\bigcup_{i=0}^{d} \Supp(\Ext^i_R(M,R)).$$
Thus,	$\Hann(M)$ and $\Ann(M)$ have  same radical.
	
\item[ii)]  Recall from  \cite[Page 350]{ss} that $\prod_{i=0}^d\Ann_RH^i_{\fm}(M)\subset\Ann(M)$.  By  local duality, $\Hann(M)\subseteq\Ann(M).$ Recall that $g=\inf\{i:\Ext^i_R(M,R)\neq 0 \}$ and that $\Gdim (M)=\sup\{i:\Ext^i_R(M,R)\neq 0 \}$. By definition and local duality,
$$
\begin{array}{ll}
\Ann(M)^{G-g+1}&\subset\prod_{i=g}^{G}\Ann_R\Ext^i_R(M,R)\\&=  \prod_{i=0}^d\Ann_R\Ext^i_R(M,R)\\& =\Hann(M),
\end{array}$$ 
as claimed.\end{enumerate}
\end{proof}

 \begin{corollary}
Let $R$ be Gorenstein and $M$ be quasi-perfect. Then  $\Hann(M)=\Ann(M)$.
 \end{corollary}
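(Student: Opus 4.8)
The plan is to show that quasi-perfectness forces the product defining $\Hann(M)$ to collapse to a single factor, after which the comparison with $\Ann(M)$ is immediate from functoriality and from the Discussion above.

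First I would dispose of the trivial case $M=0$ (where $\Hann(M)=R=\Ann(M)$) and assume $M\neq0$. Recall $g=\grade(M)=\inf\{i:\Ext^i_R(M,R)\neq0\}$, and that, since $M$ is finitely generated and nonzero over the local ring $R$, Lemma \ref{fg} makes this infimum finite, so $\Ext^g_R(M,R)\neq0$. The hypothesis that $M$ is quasi-perfect says exactly that $\sup\{i:\Ext^i_R(M,R)\neq0\}$ equals this infimum; hence $\Ext^i_R(M,R)=0$ for every $i\neq g$, while $\Ext^g_R(M,R)\neq0$.

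Next I would feed this into the definition $\Hann(M)=\prod_{i=0}^{d}\Ann_R\Ext^i_R(M,R)$. For each $i\neq g$ (in particular $i>d$ is irrelevant, and $g\le d$ since $\grade(M)\le\dim R$) we have $\Ext^i_R(M,R)=0$, so $\Ann_R\Ext^i_R(M,R)=R$, which does not affect the product. Therefore $\Hann(M)=\Ann_R\Ext^g_R(M,R)$. Since $\Ext^g_R(M,-)$ is a functor annihilated by every element of $\Ann_R(M)$, this gives $\Ann_R(M)\subseteq\Ann_R\Ext^g_R(M,R)=\Hann(M)$. For the reverse inclusion I would invoke part ii) of the preceding Discussion, where the Gorenstein hypothesis is used via local duality: $\Hann(M)\subseteq\Ann_R(M)$. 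Combining the two inclusions yields $\Hann(M)=\Ann_R(M)$. Alternatively, over the Gorenstein ring $R$ one has $\Gdim(M)=\sup\{i:\Ext^i_R(M,R)\neq0\}=g$, so part iii) of the same Discussion gives directly $\Ann(M)=\Ann(M)^{\Gdim(M)-g+1}\subseteq\Hann(M)\subseteq\Ann(M)$.

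There is no serious obstacle; the one point that genuinely needs care is the product-of-ideals subtlety: from $\Ann(M)\subseteq\Ann_R\Ext^i_R(M,R)$ for every $i$ one only gets $\Ann(M)\subseteq\bigcap_i\Ann_R\Ext^i_R(M,R)$, which in general need not lie inside the product $\prod_i\Ann_R\Ext^i_R(M,R)$. It is precisely quasi-perfectness — not merely finiteness or vanishing of $\Ext$ in high degrees — that collapses the product to a single factor and makes the inclusion $\Ann(M)\subseteq\Hann(M)$ work.
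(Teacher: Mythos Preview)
Your argument is correct and matches the paper's intended approach: the corollary is left unproved there because it is immediate from part iii) of the preceding Discussion once one observes that over the Gorenstein ring $R$ quasi-perfectness gives $\Gdim(M)=g$, hence $\Ann(M)^{\Gdim(M)-g+1}=\Ann(M)$. Your primary route (collapsing the product $\Hann(M)$ to the single factor $\Ann_R\Ext^g_R(M,R)$ and then invoking Discussion ii)) is just an explicit unpacking of the same content, and your closing remark about why the product-vs-intersection issue is genuinely handled by quasi-perfectness is well observed.
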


\begin{proof}
	Recall that $g$ is the grade  of $M$. The assumptions grantee  that $\Gdim (M)-g+1=1$. Now the desired claim follows by part ii) of the above lemma.
\end{proof}

The quasi-perfect assumption is important:

\begin{example} 
Suppose $(R,\fm)$ is a polynomial ring over an infinite field and of dimension $d>3$. Let $2\leq i_1<\ldots< i_{\ell}\leq d-2$. Evans
and  Griffith \cite[Theorem A]{eg} constructed a   prime ideal $\fp$ which is  not maximal such
that $\Ext^{d-i+1}_R(R/\fp,R)\simeq R/\fm$ for $i\in\{i_1,\ldots, i_{\ell}\}$ and zero exactly when $i\notin\{i_1,\ldots, i_{\ell},d-2\}$.  The undetermined Ext is $\Ext^2_R(R/\fp,R)$. Put $\fa_2:=\Ann(\Ext^{2}_R(R/\fp,R))$.
By Observation \ref{obd}, $\dim(R/\fa_2)=d-2$. As $\fp$ is a $(d-2)$-dimensional prime ideal and $\fp\subseteq\fa_2$,  we have $\fa_2=\fp$.
Thus,
$\Hann(R/\fp)=\fm^{\ell}\fp\neq \fp=\Ann(R/\fp)$.
\end{example}

\section{comments on $\eass(-)$}

It may be $\eass(-)=\emptyset$ for a nonzero module $(-)$. To see this,
let $R$ be a  complete local integral domain of positive dimension. Let $F$  be the fraction field of $R$. Recall from Example \ref{commentcomplete} that 
$\Ext^i_R(F,R)=0$ for all $i$. So, $\eass(F)=\emptyset.$
However, we have:

\begin{observation}
	Let  $A$ be a noetherian ring	and $M\neq 0$ be finitely generated. Then $\eass(M)\neq\emptyset$.
\end{observation}

\begin{proof} Since $M$ is finitely generated, $S^{-1}\Ext^i_A(M ,A)=\Ext^i_{A_S}(M_S,A_S)$. Also,
	$\fp\in\Ass_A(-)$ if and only if $\fp A_{\fp}\in\Ass_{A_{\fp}}(-_{\fp})$. From these,
	$\fp\in\eass(M)$ if and only if $\fp A_{\fp}\in\eass(M_{\fp})$.
Then, without loss of the generality we may assume that $A$ is local. By Lemma \ref{fg},
there is an $i$ such that
	$\Ext^i_A(M,A)\neq 0$.  Consequently,
	$\emptyset\neq\Ass(\Ext^i_A(M,A))\subset\eass(M)$.
\end{proof}

The same argument shows:
\begin{corollary}
	Let  $A$ be a noetherian ring	and $M $ be homlogically nonzero. Then $\eass(M)\neq\emptyset$.
\end{corollary}

Over a Gorenstein local  ring and for a  Cohen-Macaulay module $M$
we know that $\eass(M)=\hass(M)=\Ass(M)$. The Gorenstein assumption is important:

\begin{proposition}\label{omega}
Let  $(R,\fm)$ be a  Cohen-Macaulay local ring of dimension $d>0$
with isolated  Gorenstein singulaity and possessing a canonical module.
Then $$\Ass_R(\omega_R)=\min(R)\subsetneqq\min(R)\cup\{\fm\}=\eass(\omega_R).$$
\end{proposition}

 \begin{proof}
 By isolated Gorenstein singularity we mean a non Gorenstein ring which is Gorenstein over the punctured spectrum.	
	Clearly, $\min(R)=\Ass_R(\omega_R)$.
Recall that $\Ass(\Hom_R(\omega_R,R))=\Supp(\omega_R)\cap\Ass(R)=\Spec(R)\cap\Ass(R)=\min(R)$.
Let $\fp\in\Spec(R)\setminus\{\fm\}$ (such a thing exists, since $d>0$).
Since $R_{\fp}$ is Gorenstein, $(\omega_R)_{\fp}$
is maximal Cohen-Macaulay and in view of Auslander-Bridger formula, $(\omega_R)_{\fp}$  is totally reflexive. We deduce that  $\Ext^+_{R_{\fp}}((\omega_R)_{\fp},R_{\fp})=0$.
This means that $\Supp(\Ext^+_R(\omega_R,R))\subset\{\fm\}$ and consequently, $\Ass(\Ext^+_R(\omega_R,R))\subset\{\fm\}$. Suppose on the contradiction
that $\Ext^i_R(\omega_R,R)=0$ for all $i>0$. We recall that:
\begin{enumerate}
	\item[Fact A)] (See e.g. \cite[2.2]{hh}) Let  $A$ be a generically Gorenstein local ring.
	If $\Ext^i_A(\omega_A,A)=0$  for all $i \in[1, \dim A]$, then $A$ is Gorenstein.
\end{enumerate}
By this fact $R$ is Gorenstein, a contradiction. Thus, there is an $i>0$ such that  
$\Ext^i_R(\omega_R,R)\neq0$. Hence, it is of finite length.
Therefore, $\eass(\omega_R)=\cup_{j}\Ass(\Ext^j_R(\omega_R,R))=\min(R)\cup\{\fm\}$.
\end{proof}

\begin{remark} Suppose $M_1\subset M_2$. The first item shows that it may be $\eass(M_1)\nsubseteqq\eass(M_2)$. The second item shows that  it may be $\eass(M_2/M_1)\nsubseteqq\eass(M_2)$.
\begin{enumerate}
	\item[i)]Adopt the notation of  Proposition \ref{omega}. Now, take $M_1:=\omega_R$ and $M_2:=R$.
	Then $\eass(M_1)\nsubseteqq\eass(M_2)$.
		\item[ii)] Let $R$ be of positive depth. We look at $M_1:=\fm\subset M_2:= R$. It remains to  remark that
		$\eass(M_2/M_1)=\{\fm\}\nsubseteqq\Ass(R)=\eass(M_2)$.
\end{enumerate}
\end{remark}

\begin{proposition}
	Let $0\to M_1\to M_2\to M_3\to 0$ be an exact sequence of finitely generated modules and assume that $M_3$ is locally free. Then $\eass(M_2)\subset\eass(M_1)\cup\eass(M_3)$.
\end{proposition}

 \begin{proof}
 We look at  $0\to   M_3 ^\ast\to   M_2 ^\ast\to   M_1  ^\ast\to  \Ext^1_R(M_3,R)\to \Ext^1_R(M_2,R)\to \Ext^1_R(M_1,R)\to\cdots$.
We have $0\to   M_3^\ast\to  M_2^\ast\to X\to 0$ where $X\subset M_1^\ast$. This implies that
$$\Ass(M_2^\ast)\subset\Ass(M_1^\ast)\cup\Ass(X)\subset\Ass(M_2^\ast)\cup\Ass(M_1^\ast)\quad(+)$$
For each $i>0$, we have $  \Ext^i_R(M_3,R)\to \Ext^i_R(M_2,R)\to \Ext^i_R(M_1,R)$. Then 
$0\to Z\to \Ext^i(M_2,R)\to X\to 0$ where $X\subset\Ext^i(M_1,R)$ and 
$\Ext^i(M_3,R)\to Z\to  0$. Since $M_3$ is locally free, it follows that $\ell(\Ext^i(M_3,R))<\infty$.
Then,  $\Ass(\Ext^i_R(M_3,R))\subset\{\fm\}$. The same thing holds for $Z$.
If $Z=0$, then $$\Ass(\Ext^i_R(M_2,R))=\Ass(X)\subset\Ass(X)\cup\Ass(\Ext^i_R(M_3,R))= \Ass(\Ext^i_R(M_1,R))\cup \Ass(\Ext^i_R(M_3,R))$$
If $Z\neq 0$, then  $\Ass(\Ext^i(M_2,R))\subset\Ass(X)\cup\Ass(Z)\subset\Ass(\Ext^i(M_1,R))\cup \Ass(\Ext^i_R(M_3,R))$. In both cases we have $$\Ass(\Ext^i_R(M_2,R))\subset\Ass(X)\cup\Ass(Z)\subset\Ass(\Ext^i_R(M_1,R))\cup \Ass(\Ext^i_R(M_3,R))\quad(+,+)$$ Now we use $(+)$ and $(+,+)$ to deduce that

 $$
\begin{array}{ll}
\eass(M_2)&=\Ass(M_2^\ast)\cup\left(\cup_{i>0}\Ass(\Ext^i_R(M_2,R))\right)\\& \subset\Ass(M_2^\ast)\cup\Ass(M_1^\ast)\cup\left(\cup_{i>0}(\Ass(\Ext^i_R(M_1,R))\cup \Ass(\Ext^i_R(M_3,R))\right)\\&=\eass(M_1)\cup\eass(M_3).
\end{array}$$
\end{proof}

\begin{proposition}
	Let $R$ be local and $M$ be finitely generated. Then $\min(M)\subset\eass(M)$.
	In particular, $\Ass(M)\subset\eass(M)$ provided $M$ is equi-dimensional.
\end{proposition}

\begin{proof}
	Let $\fp\in\min(\Supp(M))$.
	In view of Proposition \ref{f} we see $\fp\in\min(\eSupp(M))$. By definition,
	there is an $i$ such that $\fp\in\Supp(\Ext^i_R(M,R))$.
	Let $\fq\subset\fp$ be in $\Supp(\Ext^i_R(M,R))$. Then
	$\fq\in\Supp(M)$. Since $\fp$ is minimal in support of $M$, we deduce that
	$\fq=\fp$, i.e., $\fp\in\min\left(\Supp(\Ext^i_R(M,R))\right)$. Recall that minimal element
	of support of a finitely generated module is an associated prime ideal. From this,
	$\fp\in\Ass(\Ext^i_R(M,R))\subset\eass(M)$.
\end{proof}

\begin{proposition}
	Let $R$ be reduced and $M$ be  finitely generated. Then $\Ass(R)\cap\eass(M)=\Ass(R)\cap\eSupp(M).$
\end{proposition}

\begin{proof}
	Suppose there is  $\fp\in \Ass(R)\cap\eass(M)$. Then $\fp=(0:r)$ for some $r\in R$ and $R/\fp$ embedded into $\Ext^i_R(M,R)$ for some $i\geq 0$. We claim that $i=0$. Suppose on the contradiction that $i>0$. Then $\Ext^i_R(M,R) =\Ext^1_R(\Omega^{i-1}(M),R)$.  Here $\Omega^{i-1}(M)$ is the $(i-1)$-th syzygy module of $M$, this is well-defined because $i\neq 0$. 
 We recall from  \cite[Corollary 4.2]{comment} that:
	\begin{enumerate}
		\item[Fact A)] If $N\subset\Ext^1_R(M,-)$, then $\tr(N)$ is nil.
	\end{enumerate}
	In the light of Fact A)  we see trace of $R/\fp$ is nilpotent. As $R$ is reduced, $\tr(R/\fp)=0$. But $0\neq r\in \tr(R/\fp)$. This   contradiction
	says that $i=0$. Since $$\Ass(\Ext^0_R(M,R))=\Ass(R)\cap\Supp(M)\stackrel{\ref{f}}=\Ass(R)\cap\eSupp(M),$$ we get the claim.
\end{proof}

\begin{proposition}\label{ind}
Let $R$ be a  $d$-dimensional local ring and suppose Problem 1.1 is true over $d-1$ dimensional rings. Then $|\eass(I)|<\infty$  for all ideal $I\lhd R$ of positive grade.
\end{proposition}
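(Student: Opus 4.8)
The plan is to reduce from an arbitrary positive-grade ideal $I$ in the $d$-dimensional ring $R$ to a module living over a $(d-1)$-dimensional ring, where the hypothesis applies. First I would use Lemma \ref{long}: since $I$ has positive grade, $I$ contains an $R$-regular element $x$, and $\Ext^i_R(I,R)\simeq\Ext^{i+1}_R(R/I,R)$ for all $i>0$, while $\Ext^0_R(I,R)=\Hom_R(I,R)$. So $\eass(I)=\Ass(\Hom_R(I,R))\cup\bigcup_{i\geq 2}\Ass(\Ext^i_R(R/I,R))$, and the first set is finite (it is contained in $\Ass(R)\cap\Supp(I)$ by the standard Hom-formula, e.g.\ \cite[Ex.\ 1.2.27]{BH}). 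Thus it suffices to control $\bigcup_{i\geq 1}\Ass(\Ext^i_R(R/I,R))$, i.e.\ $\eass(R/I)$, and really only the tails $i\geq 2$.

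Next I would pass to $\overline{R}:=R/xR$, which has dimension $d-1$. Because $x$ is $R$-regular and $xR\subseteq I$, the module $N:=R/I$ is an $\overline{R}$-module, and one has the change-of-rings spectral sequence (or, more simply, the long exact sequence coming from $0\to R\xrightarrow{x}R\to\overline{R}\to 0$ applied to $\Hom_R(N,-)$) relating $\Ext^i_R(N,R)$ and $\Ext^i_{\overline R}(N,\overline R)$. Concretely, since $x$ annihilates $N$, $\Ext^i_R(N,R)\simeq\Ext^{i-1}_{\overline R}(N,\overline R)$ for $i\geq 1$ by the Rees-type isomorphism (the analogue of the Rees lemma used in the proof of Proposition \ref{mm}; more carefully one gets a two-step relation, but each $\Ext^i_R(N,R)$ for $i\geq 1$ is a subquotient built from finitely many $\Ext^j_{\overline R}(N,\overline R)$). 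Hence $\bigcup_{i\geq 1}\Ass_R(\Ext^i_R(N,R))$ is contained in $\bigcup_{j\geq 0}\Ass_{\overline R}(\Ext^j_{\overline R}(N,\overline R))=\eass_{\overline R}(N)$, pulled back along $R\twoheadrightarrow\overline R$. Since $\overline R$ is $(d-1)$-dimensional, Problem 1.1 over $(d-1)$-dimensional rings gives $|\eass_{\overline R}(N)|<\infty$, and finiteness is preserved by the (bijective on primes containing $xR$) correspondence $\Spec(\overline R)\hookrightarrow\Spec(R)$.

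Assembling the pieces: $\eass_R(I)=\Ass(\Hom_R(I,R))\cup\bigcup_{i\geq 2}\Ass(\Ext^i_R(R/I,R))$ is a union of a finite set with the image of a finite set, hence finite. The step I expect to be the main obstacle is the precise change-of-rings bookkeeping in passing from $\Ext^\bullet_R(R/I,R)$ to $\Ext^\bullet_{\overline R}(R/I,\overline R)$: one must be careful that although $xR\subseteq I$, the quotient $R/I$ need not be $\overline R$-torsionfree in any useful way, so the clean Rees isomorphism requires $x$ to be a nonzerodivisor on $R$ (which it is) and a short exact sequence argument, and one should double-check the low-degree terms ($i=1$ versus $i\geq 2$) so that nothing with a stray prime slips through. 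Everything else is formal: Lemma \ref{long}, the Hom-associated-primes formula, and the prime correspondence under $R\to R/xR$.
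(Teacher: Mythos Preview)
Your proposal is correct and follows essentially the same arc as the paper: pick a regular element in $I$, reduce to a module over $\overline{R}=R/xR$ via the Rees isomorphism, and invoke the $(d-1)$-dimensional hypothesis. The minor difference is the intermediate module: the paper applies the full strength of Lemma~\ref{long} (that $I$ annihilates $\Ext^{>0}_R(I,R)$), takes the multiplication-by-$y$ sequence $0\to I\xrightarrow{y}I\to I/yI\to 0$, and uses the vanishing of $y$ on $\Ext^i_R(I,R)$ to split the long exact sequence into short exact sequences $0\to\Ext^i_R(I,R)\to\Ext^{i+1}_R(I/yI,R)\to\Ext^{i+1}_R(I,R)\to 0$, then applies Rees to $I/yI$. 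You instead use only the shift isomorphism $\Ext^i_R(I,R)\simeq\Ext^{i+1}_R(R/I,R)$ from Lemma~\ref{long} and apply Rees directly to $R/I$. Your route is arguably cleaner, since the Rees lemma applies outright to $R/I$ (the element $x$ is $R$-regular and annihilates $R/I$), so there is no need for the ``two-step relation'' hedge you mention: the isomorphism $\Ext^i_R(R/I,R)\simeq\Ext^{i-1}_{\overline R}(R/I,\overline R)$ holds on the nose for all $i\ge 1$. The paper's route, on the other hand, makes visible exactly why the positive-grade hypothesis is doing work.
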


\begin{proof}We look at the induced long exact sequence of Ext-modules induced from  
	$0\to I\to R\to R/I \to 0 $
to see that $\Ext^i_R(I,-)\simeq\Ext^{i+1}_R(R/I,-)$ for all $i>0$.
In particular,
$I\Ext^{>0}_R(I,R)=0$. Let $y\in I$ be a regular element.
Look at the exact sequence  $$\begin{CD}
0@>>> I @>y
>> I @>>>  I/yI @>>> 0.
\\
\end{CD}
$$This induces the exact sequence $$\begin{CD}
0@>>> \Ext^i_R(I,R) @>
>> \Ext^{i+1}_R(I/yI,R) @>>> \Ext^{i+1}_R(I,R)  @>>> 0.
\\
\end{CD}
$$

Thus $|\eass_R(I)|<\infty$ provided $|\eass_R(I/yI)|<\infty$. By \cite[Page 140]{mat}, there is the isomorphism$$\Ext^{i+1}_R(I/yI,R)\simeq \Ext^i_{\overline{R}}(I/yI,\overline{R}),$$ we observe that $\eass_R(I)$ is finite if  $\eass_{\overline{R}}(I/yI)$ is finite.
\end{proof}

\begin{proposition}\label{proj}The following holds:\begin{enumerate}
		\item[i)] Problem 1.1 reduces to complete case.
	\item[ii)] Problem 1.1 reduces to maximal  Cohen-Macaulay modules, when $R$ is Cohen-Macaulay. \item[iii)] Problem  1.1 is true if $R$ is Gorenstein over the punctured spectrum.
	\item[iv)]Problem 1.1 may reduce to cyclic modules:
	Suppose $R$  is normal and Cohen-Macaulay. The following are equivalent:
	\begin{enumerate}
		\item[a)] $|\eass(M)|<\infty$ for all finitely generated $M$.
		\item[b)] $|\eass(\fb)|<\infty$  for all ideal $\fb$
		of  max-height at most two. \item[c)]  $|\eass(R/\fb)|<\infty$ for all ideal $\fb$
		of max-height at most two.\end{enumerate}
\end{enumerate}
\end{proposition}

\begin{proof}i) Let $\fp$ be a prime ideal and let $\fq\in \Ass(\widehat{R}/\fp\widehat{R})$.  Indeed, if $x\in R\setminus \fp$, then $R/ \fp \stackrel{x}\to R/ \fp$
is injective and so $\widehat{R}/ \fp\widehat{R} \stackrel{x}\to \widehat{R}/ \fp\widehat{R}$ is injective. Thus, $x\notin\bigcup _{Q\in \Ass(\widehat{R}/\fp\widehat{R})}Q$.
Therefore, $\fq\cap R=\fp$. This show that if $\fp_1\neq \fp_2$ then $\Ass(\widehat{R}/\fp_1\widehat{R})\neq \Ass(\widehat{R}/\fp_2\widehat{R})\quad(\ast)$.
Now suppose that $\bigcup_i\Ass_{\widehat{R}}(\Ext^i_R(M\otimes\widehat{R},\widehat{R})$ is finite. In view of
\cite[Theorem 23.2]{mat},
$$\Ass_{\widehat{R}}\left(\Ext^i_R(M,R)\otimes_R\widehat{R}\right)=\bigcup _{\fp\in\Ass\Ext^i_R(M,R)}\Ass(\widehat{R}/\fp\widehat{R}).$$Combine this along with $(\ast)$
we deduce that $\bigcup_i\Ass(\Ext^i_R(M,R))$ is finite.

ii)
Let $d:=\dim R$.   We may assume that $M$ is nonzero and that $\pd(M)=\infty$.
We look at the following exact
sequence:
$$0\lo\Omega_{d}\lo F_{d-1}\lo\ldots\lo F_{0}\lo M\lo 0$$
where $F_{i}$ is finitely generated free for all $i=0,\ldots,d$. Then $\Omega_{d}\neq 0$.
In view of \cite[Exercise 2.1.26]{BH} $\Omega_{d}$ is
maximal Cohen-Macaulay. Also, $\Ext^{i+d}_R(M,R)\simeq\Ext^i_R(\Omega_{d},R)$.  It turns out that finiteness of $\eass_R(\Omega_{n})$ implies the finiteness of $\eass_R(M)$. Without loss of the generality, we may assume that $M$ is maximal  Cohen-Macaulay.

iii)  Let $i\geq\dim R$.
Let  $\fp\neq \fm$ be a prime ideal. Since $R_{\fp}$ is Gorenstein, $\id(R_{\fp})=\Ht(\fp)<\infty$. Thus,
$\Ext^i_{R_{\fp}}(M_{\fp},R_{\fp})=0$. Consequently, $\Supp(\Ext^i_R(M,R))\subset\{\fm\}$.
Then  $$\eass(M)\subset \bigcup_{i=0}^{\dim R-1}\Ass(\Ext^i_R(M,R))\cup \{\fm\}.$$
So, $\eass(M)$ is  finite.

iv) Recall that  $\fa$ has \textit{max-height} at most $t$ if  $\Ht(\fp)\leq t$ for all $\fp\in\Ass(R/\fa)$. For simplicity, we recall from \cite[Theorem B.b]{aus} that:
	\begin{enumerate}
	\item[Fact A)] 	Let $R$ be an integrally closed Cohen-Macaulay ring and $N$  a finitely generated $R$-module. There is an ideal $\fb$ of max-height at most 2
	such that
	$\Ext^i_R(\fb,-)\simeq \Ext^{i+2}_R(N,- )$  $\forall i\geq2$.
\end{enumerate}

Recall  that $\Ext^i_R(\fb,-)\simeq\Ext^{i+1}_R(R/\fb,-)$  for all $i>0$. By Auslander's remark  and for all $i\geq 2$, we have $\Ext^{i+1}_R(R/\fb,-)\simeq \Ext^{i+2}_R(M,- )$. We note that the initial terms $\Ext^{i}_R(M,- )$
are not effective, because the associated prime ideals of a finitely generated module is a finite set.
This yields the claim.
\end{proof}

\begin{proposition}\label{three}Adopt one of the following assumptions:
	\begin{enumerate}
		\item[i)] $	R$ is a 3-dimensional excellent normal local domains.
\item[ii)] $	R$ is a  2-dimensional reduced excellent local rings.
\end{enumerate}Then Problem 1.1 is true.
\end{proposition}

\begin{proof} Let us bring a general phenomena. The  Gorenstein locus $\Gor(X)$ of $X:=\Spec(R)$ is the set of all primes $\fp$ such that $ R_{\fp}$ is  Gorenstein.
Since $R$ is excellent and in view of \cite{GOR}, there is an ideal $I$ such that
$X\setminus \Gor(X)=\V(I)$.

\begin{enumerate}
	\item[i)]  Normal rings are $(R_1)$.
As $R$ is normal,  $\Ht(I)>1$. Due to $\dim R=3$  we get that $\dim R/I\leq 1$. Thus, for all $i\geq3$ we have $\Supp(\Ext^i_R(M,R))\subset\V(I)$.
Therefore,  $$\eass(M)\subset \bigcup_{i=0}^{2}\Ass(\Ext^i_R(M,R))\cup \V(I).$$
Since $\V(I)$ is a finite set, it follows that $\eass(M)$ is  finite.

\item[ii)]  Reduced rings are  $(R_0)$. This implies  that $\Ht(I)>0$.
Similar to  part i),  $\eass(M)$ is  finite.
\end{enumerate}
\end{proof}

\begin{lemma}\label{22}
Let $R$ be a 2-dimensional local ring. Suppose $\eass(I)$ is finite for all unmixed ideals of codimension zero. Then $\eass(I)$ is finite for all $I$.
\end{lemma}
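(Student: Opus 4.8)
Let $(R,\fm)$ be a $2$-dimensional local ring satisfying the hypothesis: $\eass(I)$ is finite for every unmixed ideal $I$ of codimension zero (equivalently, of dimension $2$, with $\Ass(R/I)$ consisting only of minimal primes of dimension $2$). I want to conclude finiteness of $\eass(I)$ for an arbitrary ideal $I$. The plan is to peel off the parts of $I$ in higher codimension by a short exact sequence argument, reducing to the unmixed codimension-zero case, using that in a $2$-dimensional ring the only possible codimensions are $0$, $1$, $2$, and that codimension $2$ forces the relevant module to be supported at $\{\fm\}$, hence with trivially finite $\eass$. Throughout I would use that finiteness of $\eass$ propagates along short exact sequences: if $0\to A\to B\to C\to 0$ is exact, then from the long exact sequence of $\Ext^\bullet_R(-,R)$ one has $\Ass(\Ext^i_R(B,R))\subseteq\Ass(\Ext^i_R(A,R))\cup\Ass(\Ext^i_R(C,R))$, so finiteness for $A$ and $C$ gives finiteness for $B$ (and similarly two-out-of-three in the other direction).

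**Main reduction.** First I would dispose of $\Hom_R(I,R)$, whose associated primes are $\Supp(I)\cap\Ass(R)$, a finite set, so only $\Ext^{>0}$ matters; by Lemma \ref{long}, $\Ext^{i}_R(I,-)\simeq\Ext^{i+1}_R(R/I,-)$ for $i>0$, so it is equivalent to bound $\eass(R/I)$ for cyclic modules $R/I$. Write $I = I' \cap I''$ (or use a primary decomposition) where $I'$ is the intersection of the dimension-$2$ primary components and $I''$ is the intersection of the components of dimension $\le 1$; then $\Ht(I'')\ge 1$, i.e. $\dim(R/I'')\le 1$. Consider the exact sequence $0\to R/(I'\cap I'')\to R/I'\oplus R/I''\to R/(I'+I'')\to 0$. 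Here $R/I'+I''$ has dimension $\le 1$, and more importantly $I'$ can be further decomposed: $R/I'$ is not unmixed in general, but its associated primes all have dimension $2$ except possibly $\fm$-primary embedded ones. So I would next separate an $\fm$-primary "top" piece: there is an exact sequence $0\to H^0_\fm(R/I')\to R/I'\to (R/I')/H^0_\fm(R/I')\to 0$ in which the submodule is of finite length (supported at $\{\fm\}$), hence its $\eass$ is contained in $\{\fm\}$, and the quotient is unmixed of dimension $2$ — precisely the case covered by hypothesis. Chasing the three short exact sequences with the two-out-of-three principle then yields finiteness of $\eass(R/I)$, hence of $\eass(I)$.

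**Handling the dimension-$\le 1$ pieces.** The summand $R/I''$ and the term $R/(I'+I'')$ have dimension $\le 1$; their associated primes live among the height-$\ge 1$ primes of $R$. For those I cannot invoke the hypothesis directly, so I would argue that over a $2$-dimensional ring an ideal $J$ with $\dim(R/J)\le 1$ can itself be split into an unmixed-of-dimension-$1$ part and an $\fm$-primary part by the same $H^0_\fm$ trick; and a module of dimension $1$ is Cohen–Macaulay after passing to an appropriate quotient, at which point $\Ext^{i}$ for large $i$ is concentrated at $\fm$ (compare the argument in Fact \ref{proj}(ii)): for $\fp\ne\fm$ of height $1$ one localizes and uses that only finitely many such $\fp$ lie in $\Supp(J)$, while at $\fm$ there is only one prime. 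So $\eass(R/J)$ for $\dim(R/J)\le 1$ is automatically a finite set, with no hypothesis needed. This is really the base of the induction hidden in the statement.

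**Expected obstacle.** The delicate point is that $I'$, the "dimension-$2$ part," need not be unmixed: it can carry embedded primes, and in dimension $2$ the only embedded prime over a dimension-$2$ component is $\fm$ itself, which is exactly why the $H^0_\fm$ filtration is the right tool — but one must be careful that removing $H^0_\fm$ genuinely produces an \emph{unmixed} ideal of codimension zero matching the hypothesis verbatim, i.e. that $(R/I')/H^0_\fm(R/I')\cong R/\widetilde{I}$ for an ideal $\widetilde I$ with $\Ass(R/\widetilde I)=\{\text{minimal dim-}2\text{ primes}\}$. Verifying this bookkeeping, and ensuring that the short exact sequences I use have all three terms of "known" type so that two-out-of-three closes the argument without circularity, is the main thing to get right; the $\Ext$ long-exact-sequence inheritance of $\eass$-finiteness is then routine.
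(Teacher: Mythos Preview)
Your strategy --- decompose $I=I_1\cap I_2$ with $I_1$ the unmixed height-zero part and $\Ht(I_2)>0$, then run the Mayer--Vietoris sequence $0\to R/I\to R/I_1\oplus R/I_2\to R/(I_1{+}I_2)\to 0$ through $\Ext^\bullet_R(-,R)$ --- matches the paper's. The gap is your ``two-out-of-three'' principle: the inclusion $\Ass(\Ext^i_R(B,R))\subseteq\Ass(\Ext^i_R(A,R))\cup\Ass(\Ext^i_R(C,R))$ is false in general, and so is the variant you actually need for the left-hand term $A=R/I$. In the long exact sequence $\Ext^i_R(B,R)\to\Ext^i_R(A,R)\to\Ext^{i+1}_R(C,R)$, the submodule of $\Ext^i_R(A,R)$ is a \emph{quotient} of $\Ext^i_R(B,R)$, and associated primes can strictly grow under quotients (e.g.\ $\Ass(k[x,y]/(x))\not\subseteq\Ass(k[x,y])$). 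So knowing merely that $\eass(B)$ and $\eass(C)$ are finite does not bound $\eass(A)$. The paper flags exactly this point --- ``$|\Ass|<\infty$ is not well-behaved with respect to the quotient, $|\Supp|<\infty$ behaves well'' --- and the repair uses the stronger fact that $R/(I_1{+}I_2)$ has finite \emph{support}: in a $2$-dimensional local ring $\Ht(I_1{+}I_2)>0$ forces $\V(I_1{+}I_2)$ to be a finite set, so every $\Ext^\bullet_R(R/(I_1{+}I_2),R)$ is finitely supported. Localizing away from $\V(I_1{+}I_2)$ then controls the quotient-type piece by $\eass(R/I_1)\cup\eass(R/I_2)\cup\V(I_1{+}I_2)$, and the bound closes.

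Two smaller points. Your worry that $I'$ might fail to be unmixed is unfounded: taking $I'$ to be the intersection of the primary components with minimal (height-zero) radicals, $\Ass(R/I')$ consists exactly of those primes, so $I'$ is already unmixed of codimension zero and the hypothesis applies directly; the $H^0_{\fm}$ detour is unnecessary. And for the $\dim\le 1$ pieces the argument is simpler than you indicate: in a $2$-dimensional local ring any ideal $J$ with $\Ht(J)\ge 1$ has $\V(J)$ finite (each height-$1$ prime lies only under $\fm$), so $\eass(R/J)\subseteq\V(J)$ is automatically finite with no further work.
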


\begin{proof}Let $X$ be a module of dimension at most one. Then its support is finite. 
We may assume that $\Ht(I)=0$.
Suppose $I$ is not unmixed. We write $I=I_1 \cap I_2$, where $I_1$ is the unmixed part and $I_2$ is the mixed part.  As $\Ht(I_2)>0$,  $|\eass(R/I_2)|\leq|\Supp(R/I_2)|<\infty$.
Similarly,  $|\eass(R/I_1+I_2)|<\infty$.
We look at the exact sequence  $0\to R/I \to R/I_1\oplus R/I_2\to R/(I_1 + I_2) \to 0.$ This induces the following long exact sequence of Ext-modules$$\ldots\to\Ext^{i-1}_R(R/(I_1 + I_2),R)\stackrel{\rho_i}\lo \Ext^{i}_R(R/I,R)\stackrel{\sigma_i}\lo\Ext^{i}_R(R/I_1,R)\oplus\Ext^{i}_R(R/I_2,R)\to \ldots $$
Set  $E_i:=\frac{\Ext^{i-1}_R(R/(I_1 + I_2),R)}{\ker(\rho_i)}.$ In particular, $\bigcup_i \Supp(E_i)$ is finite.
 Set $D_i:=\im(\sigma_i)$. Then
$\bigcup_i \Ass(D_i)$ is finite. Now look at the exact sequence  $0\to E_i \to \Ext^{i}_R(R/I,R)\to D_i \to 0.$  This implies that
$\Ass(\Ext^{i}_R(R/I,R))\subseteq\Ass(E_i)\cup\Ass(D_i).$ 
\end{proof}

\begin{example}
Let $R$ be either $\mathbb{F}_p[[X,Y,Z]]/(X^p+Y^p+Z^p)$ or  
$k[[X,Y,Z]]/(X^2,XYZ)$.
Then $\eass(I)$ is finite for all $I$.
\end{example}

\begin{proof} First, let $R:=\mathbb{F}_p[[X,Y,Z]]/(X^p+Y^p+Z^p)$. In view of the above lemma, we assume $I$ is unmixed and of codimension zero.  Note that $R:=\mathbb{F}_p[[X,Y,Z]]/(X+Y+Z)^p$.
Set $\xi:=(x+y+z)^i$  and $\zeta:=(x+y+z)^{p-i}$. Without loss of the generality  we assume that $I=\xi R$. Its free resolution is given by:$$\begin{CD}
\cdots @>\zeta>>  R @>\xi>> R  @>\zeta>>  R @>\xi>>I.
\\
\end{CD}$$We deduce  that $|\eass(I)|<\infty$.

Now, let $R:=k[[X,Y,Z]]/(X^2,XYZ)$. 
In view of the above lemma, we may assume that $I=(x)$. Set $a:=yz$. Note that $R_a\simeq k[[y,z,y^{-1},z^{-1}]]$ which is regular. It turns out that $\Ext^i_R(I,R)$ is annihilated by some uniform power of $a$   for all
$i\geq2$, see  Proposition \ref{proj}(iii). Thus $\Supp(\Ext^i_R(I,R))\subset \V(I+yz R)$ for all
$i\geq2$. Note that $\V(I+yz R)$
is finite. So,
$\eass_R(I)\subseteq\bigcup_{i=0}^1\Ass(\Ext^i_R(I,R))\cup\V(I+yz R),$  which is a finite set.
\end{proof}

\begin{remark}
\label{red1}
Let $R$ be a normal closed Cohen-Macaulay local domain of dimension  four. Suppose
$|\eass(R/\fb)|<\infty$ for  almost complete-intersection ideal $\fb$ of height 1.
The following are equivalent:
\begin{enumerate}
\item[i)] $|\eass(R/\fb)|<\infty$ for all ideal $\fb$
of height exactly two.
\item[ii)]   $|\eass(M)|<\infty$ for all finitely generated $M$.
\item[iii)] $|\eass(R/\fb)|<\infty$ for all unmixed ideal $\fb$
of height exactly two.
\end{enumerate}
\end{remark}

\begin{proof}
$i)\Longrightarrow ii)$:
Keep the above lemma in mind and let $\fb$
be an ideal of max-height at most two.  If $\Ht(\fb)=2$ we are done by the assumption.
Suppose
$\Ht(\fb)=1$. We claim that things reduce to the unmixed case.
To this end, we write $\fb=\fb_1 \cap \fb_2$, where $\fb_1$ is the unmixed part and $\fb_2$ is the mixed part. We assume that  the claim is true for $\fb_1$. If $\Ht(\fb_2)=2$, then by the assumption $|\eass(R/\fb_2)|<\infty$. Thus, $\Ht(\fb_2)>2$. Consequently, $\dim R/\fb_2\leq 1$. Here, we used our low-dimensional assumption. This yields that $\eass(R/\fb_2)\subset\V(\fb_2)$ which is a finite set.
By the same reasoning, the claim is true for the ideal $\fb_1+\fb_2$, because $\Ht(\fb_1+\fb_2)>1$.
We look at the exact sequence  $0\to R/\fb \to R/\fb_1\oplus R/\fb_2\to R/(\fb_1 + \fb_2) \to 0.$ This induces the following long exact sequence of Ext-modules$$\ldots\to\Ext^{i-1}_R(R/(\fb_1 + \fb_2),R)\stackrel{\rho_i}\lo \Ext^{i}_R(R/\fb,R)\stackrel{\sigma_i}\lo\Ext^{i}_R(R/\fb_1,R)\oplus\Ext^{i}_R(R/\fb_2,R)\to \ldots $$
The  reasoning given by Lemma \ref{22} allow us to assume that $\fb$ is unmixed and is of height one.
In view of  \ref{proj}, we may assume that $\fb$ is not principal. Let $b$ such that it generates $\fb$ at the minimal
primes of $\fb$.
Look at the irredundant  decomposition of $(b)=\fb\cap \fc$. Let $c$ be in $\fc$ but not in minimal primes of $\fb$. If $\Ht(b,c)=2$, then  $(b,c)$ is complete-intersection and the claim follows by \ref{proj}(iii). So, $(b,c)$ is almost complete intersection.

Claim A. One has $\fb=(b:c)$. Indeed, let $x\in \fb$. Then $xc\in\fb \cap \fc=(b)$. For the other side inclusion,  let  $x\in(b:c)$. Then
$xc\in(b)=\bigcap q_i\cap \bigcap \widetilde{q}_j$ where $q_i$ (resp. $\widetilde{q}_j$) are primary components of $\fb$ (resp. $\fc$).
Since$\fb$ is unmixed,  $\{\rad(\fq_i)\}_i=\min(\fb)$. Due to the choose of $c$, $c\notin\rad(\fq_i)$ for all $i$. Since $xc\in \fq_i$ and $c\notin\rad(\fq_i)$,
 we deduce from the definition of primary ideals  that $x\in\fq_i$ for all $i$, i.e., $x\in(b)$. So, $(b:c)\subset \fb$. This proves the claim.

Claim A) induces the following exact sequence
 $$\begin{CD}
0@>>> R/\fb @>c
>> R/(b) @>>>  R/(b,c) @>>> 0.
\\
\end{CD}
$$
Keep in mind that
$\pd(R/bR)=1$. The induced long exact sequence tells us $\Ext^{i}_R(R/\fb,-)\simeq \Ext^{i+1}_R(R/(c,b),-)$ for all $i>1.$ This completes the proof of the implication  $i)\Longrightarrow ii)$.

$ii)\Longrightarrow iii)$: This is trivial.

$iii)\Longrightarrow i)$:   The proof is a repetition of the above argument. We left it to the reader.
\end{proof}

\section{Ext and the number of generators}
Denote the minimal number of generators of  a module by $\mu(-)$. The main result
of this section slightly extends a result of Serre (and others), by using a simple method.

\begin{lemma}\label{beta}
Let $(R,\fm)$ be  local  and $M$ be  such that $p:=\pd(M)<\infty$. Then  $\mu(\Ext^p_R(M,R))=\beta_p(M).$
\end{lemma}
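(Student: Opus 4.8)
The plan is to identify $\mu(\Ext^p_R(M,R))$ with the rank of the last map in a minimal free resolution of $M$, after applying $\Hom_R(-,R)$. Let $F_\bullet\colon 0\to F_p\xrightarrow{d_p} F_{p-1}\to\cdots\to F_0\to M\to 0$ be a minimal free resolution; such a resolution exists and has length exactly $p=\pd(M)$. Because the resolution is minimal, all entries of the matrices $d_i$ lie in $\fm$. Dualizing, the complex $\Hom_R(F_\bullet,R)$ computes $\Ext^i_R(M,R)$, and since $F_p$ is the top term, $\Ext^p_R(M,R)=\coKer\bigl(\Hom(d_p,R)\colon \Hom(F_{p-1},R)\to\Hom(F_p,R)\bigr)$. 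So $\Ext^p_R(M,R)$ is a cokernel of a map into the free module $\Hom(F_p,R)\cong R^{\beta_p(M)}$.

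**Next I would** invoke minimality of the dual presentation: the transpose matrix representing $\Hom(d_p,R)$ still has all its entries in $\fm$ (transposing does not move entries out of the maximal ideal). Therefore the presentation $R^{\beta_{p-1}(M)}\xrightarrow{d_p^{\,t}} R^{\beta_p(M)}\to \Ext^p_R(M,R)\to 0$ is a minimal presentation in the sense that the image of $d_p^{\,t}$ is contained in $\fm\cdot R^{\beta_p(M)}$. By Nakayama's lemma, for any finitely generated module $N$ with a presentation $R^a\xrightarrow{\varphi} R^b\to N\to 0$ where $\im(\varphi)\subseteq \fm R^b$, one has $\mu(N)=b$: indeed $N\otimes_R R/\fm \cong R^b\otimes R/\fm$ has dimension $b$ over the residue field, and $\mu(N)=\dim_{R/\fm}(N/\fm N)$. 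Applying this with $b=\beta_p(M)$ gives $\mu(\Ext^p_R(M,R))=\beta_p(M)$ directly.

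**The only real point to be careful about** is the claim that $d_p^{\,t}$ has entries in $\fm$, i.e. that dualizing a minimal resolution of a module of finite projective dimension yields a minimal complex in the relevant spot; this is immediate since the matrix of $\Hom(d_p,R)$ with respect to the dual bases is literally the transpose of the matrix of $d_p$, and an element of $R$ lies in $\fm$ regardless of which position it occupies. One should also note that finiteness of $\pd(M)$ guarantees $F_p\neq 0$ is genuinely the last term, so that $\beta_p(M)=\rank F_p$ is well-defined and $\Ext^p_R(M,R)$ is indeed the cokernel at the end rather than a subquotient. With these observations the argument is a short application of Nakayama, and I do not anticipate a substantive obstacle; the statement is essentially the dual-basis bookkeeping made precise.
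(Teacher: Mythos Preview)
Your argument is correct and follows essentially the same route as the paper: take a minimal free resolution, dualize, observe that $\Ext^p_R(M,R)$ is the cokernel of $d_p^{\,t}$ whose entries still lie in $\fm$, and apply Nakayama (equivalently, $\mu(E)=\dim_{R/\fm}(E/\fm E)$) to conclude. The paper's proof is identical in substance, only slightly less explicit about the transpose-preserves-$\fm$-entries point that you rightly flag.
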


\begin{proof}Recall that $\beta_p(-)$ is the Betti number.
Look at the minimal free resolution of $M$$$\begin{CD}
0 @>>> R^{\beta_p(M)} @>
X>>R^{\beta_{p-1}(M)}@>>>
\ldots @>>>  R^{\mu(M)} @>>> M@>>> 0,@.
\\
\end{CD}
$$where  $X$ is a  matrix with entries from $\fm$.
Delete $M$ from the right and apply $\Hom_R(-,R)$ we arise to the following complex$$\begin{CD}
0 @>>>R^{\mu(M)} @>
>>  \ldots @ >>> R^{\beta_{p-1}(M)}@>X^t>>  R^{\beta_p(M)}@>>> 0.
\\
\end{CD}
$$ Here, $(-)^{t}$ denotes the transpose. For simplicity, we set $\ell:=\beta_{p-1}$ and $\underline{\textbf{x}}_i:=(x_{i1},\cdots, x_{i\beta_p(M)})$. Thus  $\Ext^p_R(M,R)=\frac{R^{\beta_p(M)}} {(\underline{\textbf{x}}_1,\cdots,\underline{\textbf{x}}_{\ell})}.$
This shows that $\mu(\Ext^p_R(M,R))\leq \mu(R^{\beta_p(M)})$.
One has
$(\underline{\textbf{x}}_1,\cdots,\underline{\textbf{x}}_{\ell})R^{\beta_p(M)}\subset\fm R^{\beta_p(M)}$. Consequently,
$\fm\frac{R^{\beta_p(M)}}{(\underline{\textbf{x}}_1,\cdots,\underline{\textbf{x}}_{\ell})}=\frac{\fm R^{\beta_p(M)}}{(\underline{\textbf{x}}_1,\cdots,\underline{\textbf{x}}_{\ell})}$.
By \cite[Page 35]{mat}, $\mu(E)=\dim_{R/ \fm}(E/ \fm E)$ for a finitely generated $R$-module $E$. Therefore,  $$
\begin{array}{ll}
\mu(\Ext^p_R(M,R))&=\dim_{R/ \fm}(\frac{ R^{\beta_p(M)}}{(\underline{\textbf{x}}_1,\cdots,\underline{\textbf{x}}_{\ell})}/ \frac{\fm R^{\beta_p(M)}}{(\underline{\textbf{x}}_1,\cdots,\underline{\textbf{x}}_{\ell})})\\&= \dim_{R/ \fm}(\frac{R^{\beta_p(M)}}{\fm R^{\beta_p(M)}})\\&=\mu(R^{\beta_p(M)})\\&=\beta_p(M).
\end{array}$$
\end{proof}

Let us recalling the following two results:

\begin{theorem}\label{6.1}(Macaulay 1904, Vasconcelos  1967, and Smith 2013) Let $F$ be a field and $I$ be an ideal in $S:=F[X,Y]$ such
	that $S/I$ is a Poincar\'{e} duality algebra. Then $\mu(I)=2$.
\end{theorem}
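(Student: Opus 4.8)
The plan is to unwind the hypothesis ``$S/I$ is a Poincaré duality algebra'' into a homological statement about $\Ext$, and then invoke the regular-local-ring theory already recalled in this section. First I would observe that a Poincaré duality algebra $A:=S/I$ is in particular a finite-dimensional graded (or local) Gorenstein ring: the one-dimensional top socle in degree the formal dimension is exactly the statement that $A$ is Artinian Gorenstein. Since $S=F[X,Y]$ is a two-dimensional regular local (or $*$local) ring, $A=S/I$ being Artinian forces $\dim A=0$, hence $\Ht(I)=2=\pd_S(A)$ by Auslander--Buchsbaum, so $I$ is a grade-two ideal and $\Ext^j_S(A,S)=0$ for $j\neq 2$ while $\Ext^2_S(A,S)\neq 0$; moreover $A$ Gorenstein of codimension $2$ means precisely that $\Ext^2_S(A,S)\cong A$ is cyclic, i.e.\ generated by one element.

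Next I would feed this into the Griffith--Evans form of the Macaulay--Serre result quoted just above the statement: over a regular local ring containing a field, a height-two prime $\fp$ with $\Ext^2_R(R/\fp,R)$ cyclic is two-generated. Strictly I want the version for ideals rather than primes and in the graded setting, but the minimal free resolution argument is the same: since $\pd_S(A)=2$, the minimal free resolution of $A$ has the shape $0\to S^{b}\to S^{a}\to S\to A\to 0$ with $a=\mu(I)$ and $b=\operatorname{rank}$ of the last syzygy. Dualizing into $S$ and using that $\Ext^2_S(A,S)$ is the cokernel of the dual map $S^{a}\to S^{b}$, cyclicity of $\Ext^2_S(A,S)$ forces, after the Hilbert--Burch structure theorem for codimension-two perfect ideals, that $b=1$; then the alternating sum of ranks $1-a+b=0$ gives $a=2$. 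So $\mu(I)=2$.

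Concretely the key steps, in order, are: (1) Poincaré duality algebra over $F[X,Y]$ $\Rightarrow$ $A=S/I$ is Artinian Gorenstein $\Rightarrow$ $I$ is a perfect ideal of grade $2$ with $\Ext^2_S(A,S)\cong A$ cyclic; (2) invoke Hilbert--Burch: every grade-two perfect ideal of $S$ has a free resolution $0\to S^{n-1}\xrightarrow{\varphi} S^{n}\to I\to 0$ with $I$ generated by the maximal minors of $\varphi$, so $\mu(I)\le n$ with equality for a minimal resolution; (3) compute $\Ext^2_S(A,S)=\coker(\varphi^{t})$ and note it is the ideal of $1\times 1$ minors twisted, which is cyclic exactly when $n-1=1$, i.e.\ $n=2$; (4) conclude $\mu(I)=2$. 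Equivalently one can shortcut (2)--(3) by applying the already-quoted Griffith--Evans statement almost verbatim once one checks the prime hypothesis there can be relaxed to $S/I$ Gorenstein of codimension two.

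The main obstacle I anticipate is bookkeeping at the boundary between the hypotheses: the Griffith--Evans result as quoted is stated for a \emph{prime} ideal $\fp$ in a regular local ring, whereas here $I$ need only be $\fm$-primary (the maximal graded ideal) and $S$ is a polynomial ring, so I must either re-prove the cyclicity $\Rightarrow$ two-generated implication directly via Hilbert--Burch (which is clean and self-contained in codimension two) or argue that passing to $S_{\fm}$ and homogenizing does no harm. A secondary point to be careful about is matching conventions: ``Poincaré duality algebra'' should be pinned down to mean $\dim_F A<\infty$ together with a perfect pairing $A_k\times A_{D-k}\to A_D\cong F$, and I should cite this is equivalent to $A$ being Artinian Gorenstein (e.g.\ via the socle being one-dimensional). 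Once that translation is made, steps (2)--(4) are routine structure theory.
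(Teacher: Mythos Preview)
The paper does not give a self-contained proof of Theorem~\ref{6.1}; it is stated as a known historical result with references, followed only by the remark ``The Poincar\'{e} duality algebra is Gorenstein.'' However, the machinery the paper develops immediately afterwards---Lemma~\ref{beta} and the Observation showing $\mu(\Ext^2_S(S/I,S))=\mu(I)-1$ for height-two Cohen--Macaulay ideals of finite projective dimension via Hilbert--Burch---amounts to exactly the argument you outline. Your route (Poincar\'{e} duality $\Rightarrow$ Artinian Gorenstein $\Rightarrow$ $\Ext^2_S(A,S)\cong\omega_A\cong A$ is cyclic, then Hilbert--Burch plus minimality of the resolution to force $\beta_2(A)=1$ and hence $\mu(I)=2$) is the same strategy; your step~(3), reading off $\mu(\Ext^2)$ from the dualized minimal resolution, is precisely Lemma~\ref{beta}.

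One small wording issue: your phrase ``it is the ideal of $1\times 1$ minors twisted, which is cyclic exactly when $n-1=1$'' is imprecise. What you actually need (and what Lemma~\ref{beta} proves) is that because the entries of $\varphi$ lie in $\fm$, the cokernel of $\varphi^t\colon S^n\to S^{n-1}$ has $\mu=n-1$; cyclicity then forces $n-1=1$. You also correctly flag that the Griffith--Evans statement quoted in the paper is for prime ideals, and that the clean way around this is to argue directly via Hilbert--Burch rather than by quoting that result; this is indeed what the paper's Observation does.
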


They proved a little more,  see \cite[Proposition 2.4]{w1}. The  Poincar\'{e} duality algebra is
Gorenstein.

\begin{theorem}\label{6.2}(Serre 1960) Let $S$ be a regular local ring and $I$ be a height two ideal such that
	that $R:=S/I$ is Gorenstein. Then $\mu(I)=2$.
\end{theorem}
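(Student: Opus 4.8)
The plan is to reduce the statement to the assertion that $\Ext^2_S(S/I,S)$ is a cyclic $S$-module, and then to extract that cyclicity from the Gorenstein hypothesis. Indeed, $S$ is regular local, hence Cohen--Macaulay of finite global dimension, and $R:=S/I$ is Cohen--Macaulay (being Gorenstein) of dimension $\dim S-2$; so $I$ is a Cohen--Macaulay ideal of height two and finite projective dimension, and the formula $\mu(\Ext^2_R(R/I,R))=\mu(I)-1$ recorded in the introduction (applied with $S$ in place of $R$) gives $\mu(\Ext^2_S(S/I,S))=\mu(I)-1$. Thus it suffices to prove $\mu(\Ext^2_S(S/I,S))=1$.

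For the reader's convenience I would recall why this holds. By the Auslander--Buchsbaum formula, $\pd_S(R)=\depth S-\depth R=\dim S-\dim R=2$, so $R$ has a minimal free resolution
$$0\lo F_2\stackrel{\phi}{\lo}F_1\lo S\lo R\lo 0,$$
and minimality gives $\rank F_1=\mu(I)=:n$. Since $S$ is a domain and $\Ht I>0$ the module $R$ is torsion, so counting ranks along the resolution yields $\rank F_2=n-1$. As $R$ is Cohen--Macaulay of grade $2$, one has $\Hom_S(R,S)=\Ext^1_S(R,S)=0$; applying $\Hom_S(-,S)$ to the resolution therefore produces the exact sequence
$$0\lo S\lo F_1^{\ast}\stackrel{\phi^{\ast}}{\lo}F_2^{\ast}\lo\Ext^2_S(R,S)\lo 0.$$
The transposed matrices again have all entries in $\fm$, so this is a minimal free resolution of $\Ext^2_S(R,S)$; reading off the free module mapping onto it gives $\mu(\Ext^2_S(R,S))=\rank F_2^{\ast}=n-1$.

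It remains to invoke the Gorenstein hypothesis. For a Cohen--Macaulay quotient $R=S/I$ of codimension two of the Gorenstein local ring $S$ one has $\Ext^2_S(R,S)\simeq\omega_R$, the canonical module of $R$ (see \cite{BH}), and $R$ is Gorenstein precisely when $\omega_R\simeq R$, which is cyclic. Hence $n-1=\mu(\omega_R)=1$, i.e.\ $\mu(I)=2$. (That $\mu(I)\geq 2$ is also immediate, since a nonzero principal ideal of the regular domain $S$ has height one.) There is no real obstacle in this argument --- it is bookkeeping around a length-two free resolution; the only points requiring a little care are the identification $\Ext^2_S(R,S)\simeq\omega_R$ and the fact that the $S$-dual of a minimal free resolution over a local ring is again minimal, both of which are standard. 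The Gorenstein hypothesis enters exactly once, namely to force $\Ext^2_S(R,S)$ to be cyclic.
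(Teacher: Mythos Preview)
Your proposal is correct and follows the same line the paper indicates: the paper does not give a standalone proof of Theorem~\ref{6.2} but remarks immediately after it that the Gorenstein hypothesis forces $\omega_R=\Ext^2_S(R,S)$ to be cyclic, and then proves the Observation $\mu(\Ext^2_S(S/I,S))=\mu(I)-1$ via Auslander--Buchsbaum and Hilbert--Burch; combining these two facts yields $\mu(I)=2$, exactly as you do. The only cosmetic difference is that you obtain $\rank F_2=\mu(I)-1$ by a direct rank count on the length-two resolution (using that $R$ is torsion) rather than by citing Hilbert--Burch, and you verify exactness of the dualized complex from $\Ext^0=\Ext^1=0$; these are equivalent to the paper's route.
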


The Gorenstein condition implies  that
$\omega_R=\Ext^2_S(R,S)$ is cyclic. In particular,
we slightly extend  theorems \ref{6.2} and \ref{6.1}:

\begin{corollary}
Let $(S,\fm)$ be a Cohen-Macaulay  local  ring and $I$ a  Cohen-Macaulay ideal of height two and of finite projective dimension. Then $\mu(\Ext^2_S(S/I,S))=\mu(I)-1.$
\end{corollary}

\begin{proof}Set $R:=S/I$.
By Auslander-Buchsbaum formula, $$\pd_S(R)=\depth_S( S)-\depth_S( R)=\depth_S( S)-\depth_R (R)=\dim (S)-\dim( S/I)=\Ht(I)=2.$$    By Hilbert-Burch,
there is a  matrix  $X$ with entries from $\fm$ such that the minimal free resolution of $S/I$ is$$\begin{CD}
0 @>>> S^{\mu(I)-1} @>
X>>
 S^{\mu(I)} @>>>  S @>>> S/I@>>> 0.@.
\\
\end{CD}
$$
From this we conclude that $\beta_2(S/I)=\mu(I)-1$.
In view of Lemma  \ref{beta} $$\mu(\Ext^2_S(S/I,S))=\beta_2(S/I)=\mu(I)-1.$$ This is what we want to prove.
\end{proof}

\begin{acknowledgement}
I thank Prof. Vasconcelos, because of valuable historical comments.
\end{acknowledgement}



\begin{thebibliography}{99}
 \bibitem{moh}  
M. Asgharzadeh,  \emph{Cohomological splitting, realization, and  finiteness}, arXiv:2101.08187.

\bibitem{comment} M. Auslander, \emph{Comments on the functor Ext}, Topology {\bf{8}} (1969), 151--166.


\bibitem{aus}
M. Auslander,
\emph{Remarks on a theorem of Bourbaki},
Nagoya Math. J. {\bf{27}}, (1966) 361-369.

\bibitem{ab}
M. Auslander, and D. Buchsbaum,  \emph{Invariant factors and two criteria for projectivity of modules}, Trans. AMS. {\bf{104}}, (1962), 516-522.

\bibitem{ar}M.
Auslander  and I. Reiten, \emph{On a generalized version of the Nakayama conjecture},
Proc. AMS  {\bf{52}} (1975),   69-74.


\bibitem{brid}
M. Bridger, \emph{The $R$-modules $\Ext^i_R(M,R)$ and other invariants of $M$}, Thesis (Ph.D.)-Brandeis University, (1967).

\bibitem{BB}
M. Bridger, \emph{
An ideal criterion for torsion freeness}, Proc. AMS {\bf{33}} (1972), 285-291.

\bibitem{BH}
W. Bruns and J. Herzog,  \emph{Cohen-Macaulay rings}, Cambridge University Press, {\bf{39}}, Cambridge, (1998).

\bibitem{Beder}J. Beder,  \emph{The grade conjecture and asymptotic intersection multiplicity}, Proc. AMS. {\bf{142}} (2014), 4065-4077.

\bibitem{e}D. Eisenbud, C. Huneke, and Wolmer Vasconcelos, \emph{Direct
methods for primary decomposition}, Invent. Math.  {\bf{110}}, (1992), 207-235.

\bibitem{eg}
 E.G.  Evans, and P. Griffith, \emph{
Local cohomology modules for normal domains},
J.  LMS  {\bf{19}} (1979),  277-284.


\bibitem{e} E.G. Evans,   P. Griffith,
\emph{The syzygy problem},
Ann. of Math.  {\bf {114}} (1981), 323-333.

\bibitem{ff}
H.B. Foxby, \emph{On the $\mu^i$ in a minimal injective resolution}, Math. Scand. {\bf{29}} (1971), 175-186.

\bibitem{j}J. P.
Jans, \emph{Some generalizations of finite projective dimension},
Illinois J. Math.  {\bf{5}} (1961), 334-344.


\emph{Nagata's criterion and openness of loci for Gorenstein and
	complete intersection}, Math. Z. {\bf160} (1978),  207-216.

\bibitem{GOR}
S. Greco, M.G. Marinari, \emph{Nagata's criterion and openness of loci for Gorenstein and
complete intersection}, Math. Z. {\bf160} (1978),  207-216.

\bibitem{41}
A. Grothendieck (notes by R. Hartshorne), \emph{Local cohomology} (LC), Springer LNM. {\bf {41}}, Springer-Verlag,  (1966).

\bibitem{sga2}
A. Grothendieck,
\emph{Cohomologie locale des faisceaux coh\'{e}rents et th\'{e}oremes de Lefschetz locaux et globaux} (SGA 2).
S\'{e}minaire de G\'{e}om\'{e}trie Alg\'{e}brique du Bois Marie  1962. Amsterdam: North Holland Pub. Co. (1968).



\bibitem{hh}D.
Hanes and C. Huneke, \emph{
	Some criteria for the Gorenstein property}, 
J. Pure Appl. Algebra {\bf{201}} (2005), no. 1-3, 4--16.

\bibitem{mac}F. S. Macaulay,
\emph{On a method of dealing with the intersections of plane curves},
Trans. AMS. {\bf {5}} (1904), 385-410.


\bibitem{mat}
H. Matsumura, \emph{Commutative ring theory}, Cambridge Studies in Advanced Math, \textbf{8}, (1986).

\bibitem{n} R. J. Nunke, \emph{Modules of extensions over Dedekind rings}, Illinois J. Math., {\bf {3}}
(1959),   222-241.

\bibitem{ps}
C. Peskine and L. Szpiro, \emph{Dimension projective finie et cohomologie locale}, IHES. Publ. Math. {\bf
42}, (1973),  49-119.

\bibitem{ss}P. Schenzel, \emph{Cohomological annihilators}, Math. Proc. Cambridge Philos. Soc. {\bf91} (1982),  345-350.

\bibitem{ser}
Jean-Pierre Serre,
\emph{Sur les modules projectifs},
S\'{e}minaire Dubreil. Alg\`{e}bre et theorie des nombres (1960-1961)
{\bf {14}}, Issue: 1,  1--16.




\bibitem{smith}
L.  Smith,  \emph{Hilbert-Kunz invariants and Euler characteristic polynomials}, Pacific J. Math.  {\bf {262}} (2013), 227-255.

\bibitem{vas}
Wolmer V. Vasconcelos,
\emph{Computational methods in commutative algebra and algebraic geometry}, Algorithms and Computation in Mathematics
{\bf{2}} Springer-Verlag, Berlin, (1998).

\bibitem{vas1}
Wolmer V. Vasconcelos,
\emph{The homological degree of a module}, Trans. AMS. {\bf350} (1998),  1167-1179.


\bibitem{w1}
Wolmer V. Vasconcelos, \emph{Ideals generated by $R$-sequences}, J. Algebra {\bf6} (1967),
309-316.


\end{thebibliography}
\end{document}